\definecolor {processblue}{cmyk}{0.96,0,0,0}
\newcommand\cyr{%
\renewcommand\rmdefault{wncyr}%
\renewcommand\sfdefault{wncyss}%
\renewcommand\encodingdefault{OT2}%
\normalfont
\selectfont}
\DeclareTextFontCommand{\textcyr}{\cyr}
\DeclareFontFamily{OT1}{rsfs}{}
\DeclareFontShape{OT1}{rsfs}{n}{it}{<-> rsfs10}{}
\DeclareMathAlphabet{\mathscr}{OT1}{rsfs}{n}{it}
\numberwithin{equation}{section}
\newtheorem{theorem}{Theorem}[section]
\newtheorem{lemma}[theorem]{Lemma}
\newtheorem{prop}[theorem]{Proposition}
\newtheorem{cor}[theorem]{Corollary}
\theoremstyle{definition}
\newtheorem{defn}[theorem]{Definition}
\newtheorem{remark}[theorem]{Remark}
\theoremstyle{remark}
\newtheorem{example}[theorem]{Example}
\newcommand{\lk}{\operatorname{lk}}
\newcommand{\Ass}{\operatorname{Ass}}
\newcommand{\pol}{\operatorname{pol}}
\newcommand{\Spec}{\operatorname{Spec}}
\newcommand{\cd}{\operatorname{cd}}
\newcommand{\reg}{\operatorname{reg}}
\newcommand{\height}{\operatorname{ht}}
\newcommand{\pd}{\operatorname{pd}}
\newcommand{\Ext}{\operatorname{Ext}}
\newcommand{\Supp}{\operatorname{Supp}}
\newcommand{\Tor}{\operatorname{Tor}}
\newcommand{\depth}{\operatorname{depth}}
\begin{document}
\title[A Generalized Serre's Condition]{A Generalized Serre's Condition}

\author{Brent Holmes}
\address{Department of Mathematics\\
University of Kansas\\
Lawrence, KS 66045-7523 USA}
\email{brentholmes@ku.edu}
\date{\today}

\thanks{2010 {\em Mathematics Subject Classification\/}: 05E40, 05E45,13C15 ,13D02,13D03,13D45}

\keywords{Serre condition, cohomological dimension, Hochster-Huneke graph, Alexander Dual, Reisner's Criterion, Simplicial Complex, Stanley-Reisner ring, Polarization.}

\begin{abstract}
Throughout, let $R$ be a commutative Noetherian ring.  A ring $R$ satisfies Serre's condition $(S_{\ell})$ if for all $\mathfrak{p} \in \Spec R,$ $\depth R_{\mathfrak{p}} \geq \min \{ \ell , \dim R_{\mathfrak{p}} \}$. Serre's condition has been a topic of expanding interest. In this paper, we examine a generalization of Serre's condition $(S_{\ell}^j)$. We say a ring satisfies $(S_{\ell}^j)$ when $\depth R_{\mathfrak{p}} \geq \min \{ \ell , \dim R_{\mathfrak{p}} -j \}$ for all $\mathfrak{p} \in \Spec R$. We prove generalizations of results for rings satisfying Serre's condition.
\end{abstract}

\maketitle

\section{Introduction}

Let $R$ be a commutative Noetherian ring.  Recall Serre's condition $(S_\ell)$.

\begin{defn}
A ring $R$ satisfies Serre's condition $(S_{\ell})$ if for all $\mathfrak{p} \in \Spec R,$ \[\depth R_{\mathfrak{p}} \geq \min \{\ell , \dim R_{\mathfrak{p}} \} .\]
\end{defn}

For a $d$-dimensional ring, being Cohen-Macaulay is equivalent to satisfying $(S_d)$.  A multitude of authors have examined Serre's condition, and the popularity of the topic has continued to grow (cf. \cite{DH16, HT11, MT09, PF14, Te07, Ya00}).  Serre's condition, like the Cohen-Macaulay condition, ties homological properties to geometric properties.  We define a generalization of Serre's condition, which also links homological properties to geometric properties.

\begin{defn}
A ring $R$ satisfies $(S_{\ell}^j)$ property if for all $\mathfrak{p} \in \Spec R,$ \[\depth R_{\mathfrak{p}} \geq \min \{ \ell , \dim R_{\mathfrak{p}} - j \} .\]
\end{defn}

In this paper, we examine results about $(S_\ell)$ from a variety of sources in the literature.  We prove generalizations for the $(S_{\ell}^j)$ property.  The following will be used implicitly in the paper when appropriate.

\begin{prop}
Let $\phi: R \rightarrow S$ be a faithfully flat homomorphism of Noetherian rings.  Then if $S$ satisfies $(S_\ell^j)$ then so does $R$.
\end{prop}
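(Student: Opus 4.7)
The plan is to reduce the inequality for $R$ at a prime $\mathfrak{p}$ to the same inequality for $S$ at a well-chosen prime $\mathfrak{q}$ over $\mathfrak{p}$, using the classical depth and dimension formulas for faithfully flat extensions. Concretely, I would fix an arbitrary $\mathfrak{p} \in \Spec R$. Because $\phi$ is faithfully flat, the induced map $\Spec S \to \Spec R$ is surjective, and more is true: every prime of $R$ is the contraction of some prime of $S$. I would choose $\mathfrak{q} \in \Spec S$ to be a \emph{minimal} prime over $\mathfrak{p} S$, which guarantees that the fiber ring $S_{\mathfrak{q}}/\mathfrak{p} S_{\mathfrak{q}}$ is Artinian and, in particular, has both depth and dimension equal to $0$.

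With this choice, the standard formulas (see, e.g., Matsumura, \emph{Commutative Ring Theory}) give
\[
\dim S_{\mathfrak{q}} = \dim R_{\mathfrak{p}} + \dim\bigl(S_{\mathfrak{q}}/\mathfrak{p} S_{\mathfrak{q}}\bigr) = \dim R_{\mathfrak{p}},
\]
\[
\depth S_{\mathfrak{q}} = \depth R_{\mathfrak{p}} + \depth\bigl(S_{\mathfrak{q}}/\mathfrak{p} S_{\mathfrak{q}}\bigr) = \depth R_{\mathfrak{p}}.
\]
Applying the hypothesis that $S$ satisfies $(S_{\ell}^{j})$ to the prime $\mathfrak{q}$ and substituting these equalities yields
\[
\depth R_{\mathfrak{p}} = \depth S_{\mathfrak{q}} \geq \min\{\ell, \dim S_{\mathfrak{q}} - j\} = \min\{\ell, \dim R_{\mathfrak{p}} - j\}.
\]
Since $\mathfrak{p}$ was arbitrary, $R$ satisfies $(S_{\ell}^{j})$.

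There is essentially no obstacle here: the proof is parallel to the classical argument for plain $(S_{\ell})$ because the parameter $j$ appears only as a constant shift inside the $\min$, and the key identities $\depth R_{\mathfrak{p}} = \depth S_{\mathfrak{q}}$ and $\dim R_{\mathfrak{p}} = \dim S_{\mathfrak{q}}$ are unaffected by this shift. The only subtlety worth highlighting in the write-up is the necessity of choosing $\mathfrak{q}$ minimal over $\mathfrak{p} S$, so that the fiber is zero-dimensional and the correction terms in the depth and dimension formulas vanish.
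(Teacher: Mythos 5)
Your proof is correct and follows the same approach as the paper: reduce the inequality at a prime $\mathfrak{p}$ of $R$ to the corresponding inequality at a prime $\mathfrak{q}$ of $S$ lying over $\mathfrak{p}$, and use the flat local homomorphism formulas to transfer depth and dimension. You are in fact a bit more careful than the paper, which simply asserts $\dim R_{\mathfrak{p}} = \dim S_{\mathfrak{q}}$ and $\depth R_{\mathfrak{p}} = \depth S_{\mathfrak{q}}$ for an unspecified $\mathfrak{q}$ contracting to $\mathfrak{p}$, whereas you correctly emphasize that $\mathfrak{q}$ must be chosen minimal over $\mathfrak{p}S$ so that the fiber $S_{\mathfrak{q}}/\mathfrak{p}S_{\mathfrak{q}}$ is zero-dimensional and the correction terms vanish.
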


\begin{proof}
Let $\mathfrak{p} \in \Spec R$.  Since $\phi$ is faithfully flat, there exists $\mathfrak{q} \in \Spec S$ such that $\mathfrak{p} = \mathfrak{q} \cap R$.  From \cite{BH98}, $\dim R_{\mathfrak{p}}=\dim S_{\mathfrak{q}}$ and $\depth R_{\mathfrak{p}}=\depth S_{\mathfrak{q}}$.  Thus $\depth R_{\mathfrak{p}} = \depth S_{\mathfrak{q}} \geq \min \{ n,\dim S_{\mathfrak{q}} - j \} = \min \{ n,\dim R_{\mathfrak{p}}-j \}$.  Thus $R$ satisfies $(S_\ell^j)$.
\end{proof}

\begin{remark}
We shall also make use of the fact that for a Stanley-Reisner ring $R$ and its associated simplicial complex $\Delta$, localizing $R$ at a prime $\mathfrak{p}$ generated by variables yields the same ring as the following process.  Localize $\Delta$ at a face $F$ generated by the variables which are not generators of $\mathfrak{p}$.  Then localize the Stanley-Reisner ring of $\lk _\Delta F$ at it's unique maximal homogeneous ideal.  Throughout the paper, we shall use $R_\mathfrak{p}$ and the Stanley-Reisner ring of $\lk_\Delta F$ interchangeably for convenience and brevity.
\end{remark}



We now describe the organization of the paper.

In Section $2$, we prove an equivalence between rings satisfying $(S_{\ell}^j)$ and the support of $\Ext$ functors.  In this section we consider rings of the form $R=S/I$ where $S$ is an $n$-dimensional polynomial ring and $I$ is a homogeneous ideal or $S$ is an $n$-dimensional complete regular local ring and $I$ is an ideal of $S$.

In Section $3$, we examine theorems from the literature which bound cohomological dimension when $R$ satisfies $(S_2)$ or $(S_3)$. We prove generalizations of these bounds that apply when $R$ is equidimensional and satisfies $(S_2^j)$ or $(S_3^j)$. Using these bounds, we bound projective dimension of $R$ when $\mathfrak{a}$ is a pure square-free monomial ideal and $R$ satisfies $(S_2^j)$ or $(S_3^j)$.  In this section we will consider $S$ to be an $n$-dimensional regular local ring containing a field, $\mathfrak{a}$ to be an ideal of $S$ of pure height and our ring to be $R=S/\mathfrak{a}$.

In Section $4$, we consider the Hochster-Huneke graph of $R$ (denoted by $G(R)$) where $R$ is a local ring or a quotient of a polynomial ring and a homogeneous ideal.  It is known that a Stanley-Reisner ring satisfies $(S_2)$ if and only if every localization of $R$ at a prime has a connected Hochster-Huneke graph \cite{Ku08, Ho16}.  We create a generalization of the Hochster-Huneke graph. We show that every localization of $R$ at a prime has a connected generalized Hochster-Huneke graph if and only if $R$ satisfies $(S_2^j)$.

In Section $5$, we expand upon a result of Yanagawa \cite{Ya00} which states that a Stanley-Reisner ring $S/I$ satisfies $(S_{\ell})$ if and only if the Alexander dual of $I$ satisfies a specific homological condition.  We combine this result with Theorem 3.4 to prove a bound on regularity of pure square-free monomial ideals.

Given a Stanley-Reisner ring $R$ with simplicial complex $\Delta$, Reisner's criterion provides a method to check the Cohen-Macaulayness of $R$ by examining the reduced homology groups of $\Delta$ \cite{Re76}. Terai made an analogous theorem for describing whether $R$ satisfies $(S_\ell)$ \cite{Te07}. In Section $6$, we generalize Terai's result by giving an equivalent condition for $(S_\ell^j)$ using homology of links in the equidimensional case.

In section $7$, we examine monomial ideals of polynomial rings over a field.  Herzog, Takayama, and Terai \cite{HT05} proved for a monomial ideal $I$ and a polynomial ring $S$, $S/I$ being Cohen-Macaulay implies $S/\sqrt{I}$ is Cohen-Macaulay. We present an analogous theorem showing $S/I$ satisfying $(S_\ell ^j)$ implies $S/\sqrt{I}$ satisfies $(S_\ell ^j)$. 


In section $8$, we prove a theorem relating $i$-skeletons of simplicial complexes and the $(S_\ell^j)$ property.  We discuss $i$-skeletons' importance with respect to depth.

Unless otherwise stated, $\mathbb{K}$ will be a field, $S=\mathbb{K}[x_1,x_2,...,x_n]$, and $I$ will be an ideal of $S$.

\section{An Equivalent Functorial Condition}

In this section, we characterize generalized Serre's condition as a homological condition.  This is an extension of a well-known theorem (see for example \cite[Proposition 2.6]{DH16}).

A catenary noetherian local ring satisfying $(S_2)$ is equidimensional by \cite[Remark 2.4.1]{Ha62}.  This is not true for $(S_2^j)$, $j>0$, and thus we will need the following notation. 

Given a ring $R=S/I$, let $Q_I$ be a minimal prime of $I$ of smallest height.  Given a prime $\mathfrak{p}$ which contains $I$, let $Q_{\mathfrak{p}}$ be a minimal prime of $I$ contained in $\mathfrak{p}$ with smallest height.  Let $\alpha_{\mathfrak{p}} = \height Q_{\mathfrak{p}} - \height Q_I$.  We note that $\alpha_{\mathfrak{p}} = 0$ for all $\mathfrak{p}$ when $R$ is equidimensional.

\begin{theorem}
Let $S$ be an $n$-dimensional polynomial ring with maximal homogenous ideal $\mathfrak{m}$ and let $I$ be a homogeneous ideal or let $S$ be an $n$-dimensional complete regular local ring with maximal ideal $\mathfrak{m}$ and let $I$ be an ideal of $S$.  Let  $R=S/I$ and let $d=\dim R$.  Then $R$ satisfies $(S_{\ell}^j)$ if and only if for all $\mathfrak{p} \in \Spec S$ containing $I$ with $\height \mathfrak{p} < n-i+\ell$, we have that $\mathfrak{p} \notin \Supp \Ext _S ^{n-i} (R,S)$ for all $i = 0,...,d-j-1-\alpha_\mathfrak{p}$.
\end{theorem}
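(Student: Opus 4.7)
The plan is to use local duality to translate the $(S_\ell^j)$ condition, which is a pointwise condition on depths of the localizations $R_\mathfrak{p}$, into the stated vanishing condition on the supports of $\operatorname{Ext}_S^{n-i}(R,S)$. The essential input is that for each $\mathfrak{p} \in \operatorname{Spec} S$ containing $I$, the ring $S_\mathfrak{p}$ is a regular (hence Gorenstein) local ring of dimension $\operatorname{ht}\mathfrak{p}$, so local duality applies to the finitely generated module $R_\mathfrak{p}$ and relates its local cohomology at $\mathfrak{p} R_\mathfrak{p}$ to the Ext modules $\operatorname{Ext}^{*}_{S_\mathfrak{p}}(R_\mathfrak{p}, S_\mathfrak{p})$. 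For primes not containing $I$ both sides are automatically trivial, so there is nothing to check.

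First I would unwind the definitions. By the local cohomology characterization of depth, $R$ satisfies $(S_\ell^j)$ if and only if for every $\mathfrak{p}\supseteq I$ and every integer $k$ satisfying $k<\ell$ and $k\leq \dim R_\mathfrak{p}-j-1$ one has $H^{k}_{\mathfrak{p} R_\mathfrak{p}}(R_\mathfrak{p})=0$. Passing to the $\mathfrak{p} S_\mathfrak{p}$-adic completion and invoking Grothendieck's local duality theorem, together with the fact that Matlis duality preserves vanishing and that completion is faithfully flat, this is equivalent to the vanishing of $\operatorname{Ext}^{\operatorname{ht}\mathfrak{p}-k}_{S_\mathfrak{p}}(R_\mathfrak{p}, S_\mathfrak{p}) = \operatorname{Ext}^{\operatorname{ht}\mathfrak{p}-k}_{S}(R,S)_\mathfrak{p}$ for the corresponding range of $k$.

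Next I would reparametrize by setting $n-i = \operatorname{ht}\mathfrak{p}-k$, so that $k=\operatorname{ht}\mathfrak{p}-n+i$. The inequality $k<\ell$ becomes $\operatorname{ht}\mathfrak{p}< n-i+\ell$, matching the prescribed height constraint. Using the catenary formula $\dim R_\mathfrak{p}=\operatorname{ht}\mathfrak{p}-\operatorname{ht} Q_\mathfrak{p}=\operatorname{ht}\mathfrak{p}-n+d-\alpha_\mathfrak{p}$, which holds in both the graded polynomial setting and the complete regular local setting, the inequality $k\leq \dim R_\mathfrak{p}-j-1$ rearranges to $i\leq d-j-1-\alpha_\mathfrak{p}$. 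The implicit constraint $k\geq 0$ imposes no extra restriction on $i$, because for $k<0$ the corresponding Ext lives in degree $n-i>\operatorname{ht}\mathfrak{p}$ and therefore vanishes automatically over the regular local ring $S_\mathfrak{p}$ of dimension $\operatorname{ht}\mathfrak{p}$.

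The main point that requires care, and the only place where the generalization really exerts itself, is the non-equidimensional bookkeeping encoded by $\alpha_\mathfrak{p}$. In the equidimensional case $\alpha_\mathfrak{p}\equiv 0$ and the argument collapses to the classical characterization of $(S_\ell)$ recorded in \cite[Proposition 2.6]{DH16}. For the general case one must verify that the chosen minimal prime $Q_\mathfrak{p}$ of $I$ contained in $\mathfrak{p}$ of smallest height indeed computes $\dim R_\mathfrak{p}$, and then propagate the shift $\alpha_\mathfrak{p}=\operatorname{ht} Q_\mathfrak{p}-\operatorname{ht} Q_I$ through the index substitution to obtain exactly the range $i=0,\ldots,d-j-1-\alpha_\mathfrak{p}$ appearing in the theorem. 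No new homological input is needed beyond this dimension computation.
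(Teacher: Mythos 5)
Your proposal is correct and takes essentially the same route as the paper: localize at each $\mathfrak{p}\supseteq I$, translate the depth bound on $R_\mathfrak{p}$ into vanishing of local cohomology, apply local duality over the regular local ring $S_\mathfrak{p}$ to convert this into vanishing of $\Ext_{S_\mathfrak{p}}^{\height\mathfrak{p}-k}(R_\mathfrak{p},S_\mathfrak{p})\cong\Ext_S^{n-i}(R,S)_\mathfrak{p}$, and then reparametrize via $k=\height\mathfrak{p}-n+i$ using the catenary dimension formula $\dim R_\mathfrak{p}=\height\mathfrak{p}-n+d-\alpha_\mathfrak{p}$. Your explicit handling of the $k<0$ edge case, where $n-i>\height\mathfrak{p}=\dim S_\mathfrak{p}$ and the Ext module vanishes for degree reasons, is a small point the paper passes over silently; otherwise the two arguments coincide in substance.
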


\begin{proof}
Suppose $R$ satisfies $(S_{\ell}^j)$. If $i < \ell \leq \depth R$ then $H_{\mathfrak{m}}^i(R) = 0 = \Ext_S^{n-i}(R,S)$
, and we are finished.
Otherwise, let us take $\mathfrak{p} \in \Spec S$ such that $I \subseteq \mathfrak{p}$. Let $h$ be the height of $\mathfrak{p}$. If $h < n-i+\ell$, then
\[ \Ext _S^{n-i}(R,S)_\mathfrak{p} \cong \Ext _{S_\mathfrak{p}}^{n-i}(R_\mathfrak{p},S_\mathfrak{p}) = 0 \quad \textrm{ for all } i = 0,...,d-j-1-\alpha_\mathfrak{p}. \]
 This is true because $n-i = h-(h-n+i)$ and $\dim R_\mathfrak{p} - j = h-n+d-j-\alpha_\mathfrak{p} > h-n+i$.
So $\mathfrak{p} \notin \Supp \Ext _S ^{n-i} (R,S)$ for $i = 0,...,d-j-1-\alpha_\mathfrak{p}$ whenever $\height \mathfrak{p} < n-(i-\ell)$.

Suppose $\mathfrak{p} \notin \Supp \Ext _S ^{n-i} (R,S)$ for $i = 0,...,d-j-1-\alpha_\mathfrak{p}$ whenever $\height \mathfrak{p} < n-(i-\ell)$. Let $V_h$ be the set of prime ideals of $S$ of height $h$ containing $I$ for $h=n-d,...,n$.  Henceforth, let us use $c$ to denote $n-d$.
 
Then we have the following equivalences:

\[ R \textrm{ satisfies } (S_{\ell}^j) \quad \Leftrightarrow \]

\[ \depth R_\mathfrak{p} \geq \min \{ \ell, h-c-j-\alpha_\mathfrak{p} \} =: b \quad \textrm{ for all } h=c,...,n  \textrm{ for all } \mathfrak{p} \in V_h \quad \Leftrightarrow \]

\[ H_{\mathfrak{p}S_\mathfrak{p}}^i(R_\mathfrak{p})=0 \quad \textrm{ for all } h=c,...,n \textrm{ for all } \mathfrak{p} \in V_h, \textrm{ for all } i < b \quad \Leftrightarrow \]

\[ \Ext _{S_\mathfrak{p}}^{h-i}(R_\mathfrak{p},S_\mathfrak{p})=0 \textrm{ for all } h=c,...,n \textrm{ for all } \mathfrak{p} \in V_h, \textrm{ for all } i < b  \quad \Leftrightarrow \]

\[\mathfrak{p} \notin \Supp \Ext_S^{h-i}(R,S)  \textrm{ for all } h=c,...,n \textrm{ for all } \mathfrak{p} \in V_h, \textrm{ for all } i < b.\]

When $i < b \leq h-c-j-\alpha_\mathfrak{p}$, $n-h+i < n-c-j-\alpha_\mathfrak{p} = d-j-\alpha_\mathfrak{p}$.

From the assumption, for all $ n-h+i < d-j-\alpha_\mathfrak{p}$, $\mathfrak{p} \notin \Supp \Ext _S^{n-(n-h+i)}(R,S)$ whenever $h \leq n-(n-h+i-\ell)$; thus,
\[\mathfrak{p} \notin \Supp \Ext _S ^{h-i} (R,S) = \Supp \Ext _S ^{n-(n-h+i)}(R,S) \textrm{ for all } i < b. \]

\end{proof}

The result can be stated in a cleaner way when the ring in question is equidimensional.  In the equidimensional case, the above proof can be adjusted to give:

\begin{cor}
Let $S$ be an $n$-dimensional polynomial ring with maximal homogenous ideal $\mathfrak{m}$ and let $I$ be a homogeneous ideal or let $S$ be an $n$-dimensional complete regular local ring with maximal ideal $\mathfrak{m}$ and let $I$ be an ideal of $S$.  Let  $R=S/I$ be an equidimensional ring.  Then $R$ satisfies $(S_{\ell}^j)$ if and only if $\dim \Ext _S ^{n-i} (R,S) \leq i-\ell$ for all $i = 0,...,d-j-1$.
\end{cor}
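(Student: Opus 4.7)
The plan is to derive the corollary directly from Theorem 2.1 by specializing to the equidimensional setting and reinterpreting the support condition as a dimension bound.

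First, I would observe that when $R = S/I$ is equidimensional, all minimal primes of $I$ have the same height, so $\alpha_\mathfrak{p} = \height Q_\mathfrak{p} - \height Q_I = 0$ for every prime $\mathfrak{p}$ containing $I$. Consequently the index range $i = 0, \ldots, d-j-1-\alpha_\mathfrak{p}$ appearing in Theorem 2.1 collapses to the uniform range $i = 0, \ldots, d-j-1$, and the height inequality $\height \mathfrak{p} < n-i+\ell$ no longer carries a prime-dependent shift.

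Next, I would translate the vanishing-in-support condition into a Krull-dimension statement for the $\Ext$ module. Because $S$ is a regular ring of dimension $n$ (either a polynomial ring over a field or a complete regular local ring), every prime satisfies $\dim S/\mathfrak{p} = n - \height \mathfrak{p}$. Moreover, $\Ext_S^{n-i}(R,S)$ is an $R$-module, so every prime in its support automatically contains $I$; there are no rogue primes to worry about. Hence the condition ``$\mathfrak{p} \notin \Supp \Ext_S^{n-i}(R,S)$ whenever $\height \mathfrak{p} < n-i+\ell$'' is equivalent to ``every $\mathfrak{p} \in \Supp \Ext_S^{n-i}(R,S)$ satisfies $\height \mathfrak{p} \geq n-i+\ell$,'' which in turn is equivalent to
\[ \dim \Ext_S^{n-i}(R,S) \;\leq\; n - (n-i+\ell) \;=\; i-\ell. \]
Applying this reformulation for each $i$ in the collapsed range yields the stated equivalence.

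I do not expect a serious obstacle here. The correction term $\alpha_\mathfrak{p}$ disappears by hypothesis, and in a regular ambient ring the passage from ``no small-height prime in the support'' to ``small Krull dimension of the module'' is automatic. The only genuine care needed is to rerun the equivalence chain at the end of the proof of Theorem 2.1 with $\alpha_\mathfrak{p} = 0$ throughout, verifying that the arithmetic manipulations $n-i = h-(h-n+i)$ and $\dim R_\mathfrak{p} - j = h-n+d-j > h-n+i$ still produce the right index shift; this is a routine inspection and introduces no new difficulty.
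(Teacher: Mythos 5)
Your proposal is correct and follows the same route the paper takes: the paper's Corollary 2.2 is stated with only the remark that the proof of Theorem 2.1 "can be adjusted" in the equidimensional case, and your argument is exactly the right adjustment — note $\alpha_\mathfrak{p}=0$ so the index range becomes uniform, note $\Ext_S^{n-i}(R,S)$ is an $R$-module so its support already lies over $I$, and use $\dim S/\mathfrak{p} = n - \height\mathfrak{p}$ in the regular ambient ring to convert the height-threshold-on-support condition into the Krull-dimension bound $\dim \Ext_S^{n-i}(R,S) \le i-\ell$. No gaps; this is a faithful elaboration of what the paper leaves implicit.
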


\section{Bounds on Cohomological Dimension}

Local cohomology is widely important in the fields of commutative algebra and algebraic geometry. The vanishing of local cohomology is subject to deep analysis throughout the fields. One measure of this vanishing is the cohomological dimension. Let $S$ be a Noetherian local ring and $\mathfrak{a}$ be an ideal of $S$. The cohomological dimension of $\mathfrak{a}$ in $S$ is:

\begin{center}
$\cd (S,\mathfrak{a}) = \sup \{ i \in \mathds{Z} _{\geq 0} | H_\mathfrak{a} ^i (M) \neq 0 $ for some $S$-module $M \}$.
\end{center}

The cohomological dimension has been a topic garnering wide interest (cf. \cite{Ha68, Og73, PS73, HL90, Ly93, Va13, DT16}). It is known that $\cd (S,\mathfrak{a})$ is less than or equal to $\dim S$. We desire to improve this bound for special cases.  We will take advantage of various existing results to aid in this endeavor.  If $\depth S/\mathfrak{a} \geq 1$, then $\cd (S, \mathfrak{a}) \leq n-1$, which is
an immediate consequence of the Hartshorne-Lichtenbaum vanishing theorem (\cite[Theorem 3.1]{Ha68}). From the work of Ogus, we have $\depth S/\mathfrak{a} \geq 2$, then $\cd(S, \mathfrak{a}) \leq n-2$ (Remark below Corollary 2.11 \cite{Og73}).

In \cite{HL90}, Huneke and Lyubeznik prodce a theorem which helps construct more of these bounds. In \cite{DT16}, the theorem is reinterpreted for convenience of application.

\begin{theorem}[{Huneke-Lyubeznik, see \cite[Theorem 3.7]{DT16}}]
Let (S,m) be a $d$-dimensional regular local ring containing a field and let $\mathfrak{a} \subset S$ be an ideal of pure height $c$. Let $f$ $:$ $\mathds{N} \rightarrow \mathds{N}$ be a non-decreasing function. Assume there exist integers $\ell' \geq \ell \geq c$ such that:

($1$) $f(\ell) \geq c$,

($2$) $\cd (S_\mathfrak{p},\mathfrak{a}_\mathfrak{p}) \leq f(\ell'+1)-c+1$ for all prime ideals $\mathfrak{p} \supset \mathfrak{a}$ with $\height \mathfrak{p} \leq \ell-1$,

($3$) $\cd (S_\mathfrak{p},\mathfrak{a}_\mathfrak{p}) \leq f(\height \mathfrak{p})$ for all prime ideals $\mathfrak{p} \supset \mathfrak{a}$ with $\ell \leq \height \mathfrak{p} \leq \ell'$,

($4$) $f(r-s-1) \leq f(r) - s$ for every $r \geq \ell' + 1 $ and every $c-1 \geq s \geq 1$.

Then $\cd (S,\mathfrak{a}) \leq f(d)$ if $d \geq \ell$.
\end{theorem}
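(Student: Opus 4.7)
The plan is to proceed by induction on $d = \dim S$, with the range $\ell \leq d \leq \ell'$ serving as the base case: condition (3) applied to $\mathfrak{p} = \mathfrak{m}$ immediately yields $\cd(S, \mathfrak{a}) \leq f(d)$.

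For the inductive step, assume $d \geq \ell' + 1$ and that the theorem holds for regular local rings of dimension less than $d$. I would first assemble an upper bound on $\cd(S_\mathfrak{p}, \mathfrak{a}_\mathfrak{p})$ at each non-maximal prime $\mathfrak{p} \supset \mathfrak{a}$: hypothesis (2) handles the range $\height \mathfrak{p} \leq \ell - 1$; hypothesis (3) handles $\ell \leq \height \mathfrak{p} \leq \ell'$; and the inductive hypothesis applied to $S_\mathfrak{p}$, a regular local ring of dimension $\height \mathfrak{p} < d$, handles $\ell' < \height \mathfrak{p} < d$, giving $\cd(S_\mathfrak{p}, \mathfrak{a}_\mathfrak{p}) \leq f(\height \mathfrak{p})$. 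These local bounds, combined with the monotonicity of $f$ and condition (1) (which reconciles the weaker bound from (2) with the uniform framework), yield an estimate on the dimension of $\Supp H^q_\mathfrak{a}(S)$ in terms of $g(q) := \min\{h : f(h) \geq q\}$, namely $\dim \Supp H^q_\mathfrak{a}(S) \leq d - g(q)$.

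To conclude $\cd(S, \mathfrak{a}) \leq f(d)$, I would employ the Grothendieck composition spectral sequence
\[
E_2^{p,q} = H^p_\mathfrak{m}\bigl(H^q_\mathfrak{a}(S)\bigr) \Longrightarrow H^{p+q}_\mathfrak{m}(S),
\]
noting that $H^{p+q}_\mathfrak{m}(S)$ vanishes unless $p + q = d$ since $S$ is $d$-dimensional regular. If $H^i_\mathfrak{a}(S) \neq 0$ for some $i > f(d)$, the support estimate forces $\Supp H^i_\mathfrak{a}(S) \subseteq \{\mathfrak{m}\}$, so $E_2^{0, i} = H^i_\mathfrak{a}(S)$ is nonzero and must then be killed by a chain of outgoing differentials $d_r \colon E_r^{0, i} \to E_r^{r, i-r+1}$, each requiring $E_r^{r, i-r+1} \neq 0$ and hence $r \leq d - g(i-r+1)$. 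The arithmetic content of condition (4), $f(r-s-1) \leq f(r) - s$ for $r \geq \ell' + 1$ and $1 \leq s \leq c - 1$, is tuned precisely so that iterated application blocks every such configuration, yielding the required contradiction.

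The main obstacle is the spectral-sequence bookkeeping in the last step: condition (4) must be combined iteratively with the inductive dimension bounds to exclude survivors on every higher page, not just the first few. The top-degree case $i = d$, where the target $H^d_\mathfrak{m}(S)$ is itself nonzero, is delicate and requires separate input, most naturally the Hartshorne--Lichtenbaum vanishing theorem recalled just above Theorem 3.1. Once the numerical interlocking of (1) and (4) with the spectral sequence is verified, the inductive step closes.
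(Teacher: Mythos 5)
First, a clarifying remark about what you are comparing against: the paper does not prove this theorem at all. It is stated as a citation of Dao--Takagi (Theorem 3.7 of \cite{DT16}), which in turn repackages Huneke--Lyubeznik \cite{HL90}, and the paper passes to its corollaries without reproducing the argument. So there is no ``paper's own proof'' here; you should measure your proposal against the proof in the cited sources.

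Your skeleton --- induction on $d$, the base case $\ell \leq d \leq \ell'$ handled by applying (3) at $\mathfrak{m}$, assembling local bounds from (2), (3), and the inductive hypothesis, then showing $\Supp H^i_\mathfrak{a}(S)$ is concentrated at $\mathfrak{m}$ for $i > f(d)$ --- is a reasonable reconstruction of the outer shell of the Huneke--Lyubeznik argument. The genuine gap is in the finish. The decisive input in Huneke--Lyubeznik is a structural theorem about local cohomology over a complete regular local ring containing a field: a nonzero $H^i_\mathfrak{a}(S)$ cannot be supported only at $\mathfrak{m}$ unless $i = d$. That statement is \emph{not} an elementary consequence of the Grothendieck composition spectral sequence $E_2^{p,q} = H^p_\mathfrak{m}(H^q_\mathfrak{a}(S)) \Rightarrow H^{p+q}_\mathfrak{m}(S)$; it rests on the $D$-module theory (characteristic $0$) or $F$-module theory (characteristic $p$) developed precisely to prove it, and in the equal-characteristic-zero setting it is Ogus/Hartshorne territory. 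Your plan asks the spectral sequence plus condition (4) to substitute for that machinery, and it cannot.

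Concretely: once $\Supp H^i_\mathfrak{a}(S) \subseteq \{\mathfrak{m}\}$, you correctly identify that $E_2^{0,i} = H^i_\mathfrak{a}(S)$ must die, so some $d_r\colon E_r^{0,i} \to E_r^{r,\,i-r+1}$ must be nonzero, forcing $E_r^{r,\,i-r+1} \neq 0$ and hence a lower bound on $\dim\Supp H^{i-r+1}_\mathfrak{a}(S)$. But condition (4) constrains $f$ only in the narrow window $1 \leq s \leq c-1$; rewriting it with $s = r-1$ shows it can rule out the pages $2 \leq r \leq c$ only, while differentials on pages $r > c$ (up through $r \leq d - c$, the Grothendieck vanishing range for modules supported on $V(\mathfrak{a})$) are untouched by (4). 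There is no mechanism in your outline to exclude those, and that is precisely the work the $D$-/$F$-module theorem is doing in the original. Your own closing sentence --- that the bookkeeping ``must be combined iteratively \ldots to exclude survivors on every higher page'' and remains to be ``verified'' --- is flagging the unfinished core of the argument, not a routine detail. A smaller but also real issue: the local bound from (2) is the \emph{non-uniform} $f(\ell'+1)-c+1$ rather than $f(\height\mathfrak{p})$, and the claim that condition (1) ``reconciles'' this with the uniform support estimate is asserted, not demonstrated; in the original argument (1) and the pure-height hypothesis enter through a cut-by-generic-elements step, not through the support estimate you write down. To repair the proposal you would need either to import the Huneke--Lyubeznik structure theorem as a black box and discard the spectral-sequence finish, or to follow the original hyperplane-section induction in which (4) governs how $f$ transforms when one passes to $S$ modulo a generic sequence of length $s+1$.
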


In \cite{DT16}, Dao and Takagi proved the following corollary:

\begin{cor}[Dao-Takagi]
Let $(S,\mathfrak{m})$ be an $n$-dimensional regular local ring essentially of finite type over a field. If $\mathfrak{a}$ is an ideal of $S$ such that $\depth S/\mathfrak{a} \geq 3$, then $\cd (S,\mathfrak{a}) \leq n-3$.
\end{cor}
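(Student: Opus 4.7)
The plan is to deduce this from the Huneke--Lyubeznik criterion (Theorem 3.1) with an inductive application on $d = \dim S$. Since $\cd(S,\mathfrak{a}) = \cd(S, \sqrt{\mathfrak{a}})$, I can replace $\mathfrak{a}$ by its radical, and then the hypothesis $\depth S/\mathfrak{a} \geq 3 > 0$ rules out embedded primes and lets me work in the pure-height setting; set $c = \height \mathfrak{a}$. The base of the induction (e.g.\ $d \leq 3$) is trivial, since then $d - 3 \leq 0$ and $\mathfrak{a}$ is either zero or makes $S/\mathfrak{a}$ Cohen--Macaulay, forcing $\cd(S,\mathfrak{a}) = 0$.

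For the inductive step, I would apply Theorem 3.1 with the non-decreasing function $f(r) = \max\{0,\,r-3\}$ and choose, say, $\ell = c+3$ and $\ell'$ large enough (e.g.\ $\ell' = 2c+1$) so that $f(\ell'+1) - c + 1 \geq c$. Conditions (1) and (4) are purely arithmetic: $f(\ell) = c$, and for $r \geq \ell'+1$ and $1 \leq s \leq c-1$ one has $f(r-s-1) = r-s-4 \leq r-s-3 = f(r)-s$.

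The content is in conditions (2) and (3). For (2), primes $\mathfrak{p} \supset \mathfrak{a}$ with $\height \mathfrak{p} \leq c+2$ satisfy $\dim S_\mathfrak{p}/\mathfrak{a}_\mathfrak{p} \leq 2$; the classical bound $\cd(S_\mathfrak{p}, \mathfrak{a}_\mathfrak{p}) \leq \dim S_\mathfrak{p}$, sharpened by Hartshorne--Lichtenbaum and Ogus when $\depth (S/\mathfrak{a})_\mathfrak{p} \geq 1$ or $2$, gives the requisite estimate. For (3), primes $\mathfrak{p}$ with $c+3 \leq \height \mathfrak{p} \leq \ell'$ localize $(S,\mathfrak{a})$ to a strictly smaller regular local ring $(S_\mathfrak{p}, \mathfrak{a}_\mathfrak{p})$ satisfying the same hypotheses, so the inductive hypothesis yields $\cd(S_\mathfrak{p}, \mathfrak{a}_\mathfrak{p}) \leq \height \mathfrak{p} - 3 = f(\height \mathfrak{p})$. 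Since $d > \ell' \geq \ell$ in the non-trivial case, Huneke--Lyubeznik concludes $\cd(S,\mathfrak{a}) \leq f(d) = d - 3 = n - 3$.

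The main obstacle is propagating the depth hypothesis from the maximal ideal down to the localizations at primes appearing in conditions (2) and (3); in general $\depth S/\mathfrak{a} \geq 3$ is weaker than $(S_3)$, and the naive inequalities can fail. The essentially-of-finite-type hypothesis (hence excellence, equidimensionality of the completion, and catenarity) is what I would lean on here, together with the Ext-support reformulation in Corollary 2.2, to guarantee that $\depth (S/\mathfrak{a})_\mathfrak{p}$ is large enough to feed into the Hartshorne--Lichtenbaum, Ogus, and inductive bounds. Making that depth-descent rigorous is where the bulk of the work lies; everything else is a bookkeeping exercise with $f$, $\ell$, and $\ell'$.
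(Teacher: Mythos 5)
The paper does not prove Corollary~3.2 at all: it is quoted verbatim as a known result of Dao and Takagi (\cite{DT16}), and then used as an input in the proof of Theorem~3.4. So the relevant comparison is between your proposal and the actual argument in \cite{DT16}, which is \emph{not} an application of the Huneke--Lyubeznik theorem.

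Your proposal has a genuine gap that you yourself flag but do not close, and it is not ``bookkeeping'' --- it is the entire content of the statement. To verify conditions~(2) and~(3) of Theorem~3.1 with $f(r)=\max\{0,r-3\}$ you need, at primes $\mathfrak{p}\supset\mathfrak{a}$ of height $c+2$, that $\depth (S/\mathfrak{a})_\mathfrak{p}\geq 2$ (to invoke Ogus), and at primes of height $c+3,\dots,\ell'$, that $\depth (S/\mathfrak{a})_\mathfrak{p}\geq 3$ (to invoke the inductive hypothesis). Neither follows from $\depth S/\mathfrak{a}\geq 3$: depth at the maximal ideal imposes no lower bound on depth at smaller primes. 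If those local depth bounds did hold, you would simply have $(S_3)$, and the corollary would collapse into Theorem~3.3(2) (equivalently, Theorem~3.4(2) with $j=0$) --- but those theorems \emph{use} Corollary~3.2 in their proofs, so the argument would be circular. The reduction to $\sqrt{\mathfrak{a}}$ is also not free: $\cd(S,\mathfrak{a})=\cd(S,\sqrt{\mathfrak{a}})$, but $\depth S/\sqrt{\mathfrak{a}}$ can be smaller than $\depth S/\mathfrak{a}$, so the hypothesis does not transfer; nor does $\depth S/\mathfrak{a}\geq 1$ kill embedded primes away from $\mathfrak{m}$ or force pure height. The actual proof in \cite{DT16} proceeds by a different route, exploiting the structure of the local cohomology modules $H^{n-1}_\mathfrak{a}(S)$ and $H^{n-2}_\mathfrak{a}(S)$ (via local duality, the Grothendieck composite-functor spectral sequence, and, in positive characteristic/equal characteristic zero, the $D$-module or $F$-module finiteness theory of Lyubeznik --- this is exactly why the ``essentially of finite type over a field'' hypothesis appears) to show directly that their nonvanishing would force $H^1_\mathfrak{m}(S/\mathfrak{a})\neq 0$ or $H^2_\mathfrak{m}(S/\mathfrak{a})\neq 0$. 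Your framework is the right one for Theorem~3.4, where $(S_\ell^j)$ is by definition a condition on all localizations and hence does descend, but it cannot by itself reach a statement whose hypothesis is a depth bound only at $\mathfrak{m}$.
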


Dao and Takagi then used this corollary in conjuction with Theorem $3.1$ to prove:

\begin{theorem}[Dao-Takagi]
Let $S$ be an $n$-dimensional regular local ring containing a field and let $\mathfrak{a} \subset S$ be an ideal of height $c$.

(1) If $S/\mathfrak{a}$ satisfies Serre's condition $(S_2)$ and $\dim S/\mathfrak{a} \geq 1$, then

\begin{center}
$\cd (S,\mathfrak{a}) \leq n-1-\lfloor \frac{n-2}{c} \rfloor$
\end{center}

(2) Suppose that $S$ is essentially of finite type over a field. If $S/\mathfrak{a}$ satisfies Serre's condition $(S_3)$ and $\dim S/\mathfrak{a} \geq 2$, then

\begin{center}
$\cd (S,\mathfrak{a}) \leq n-2-\lfloor \frac{n-3}{c} \rfloor$
\end{center}

\end{theorem}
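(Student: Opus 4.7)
The plan is to apply Theorem~3.1 (Huneke--Lyubeznik) with a function $f$ engineered so that $f(n)$ equals the claimed bound. For part~(1) I would try $f(r) = \max\{c,\, r - 1 - \lfloor (r-2)/c \rfloor\}$, and for part~(2) the analogue $f(r) = \max\{c,\, r - 2 - \lfloor (r-3)/c \rfloor\}$.

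The first step is to collect the local cohomological dimension bounds needed for hypotheses~(2) and~(3) of Theorem~3.1. Because $S$ is regular, hence catenary, $(S_2)$ forces $S/\mathfrak{a}$ to be equidimensional of dimension $n - c$, so $\dim (S/\mathfrak{a})_{\mathfrak{p}} = \height \mathfrak{p} - c$ for every $\mathfrak{p} \supset \mathfrak{a}$. The property $(S_2)$ localizes, so combined with the Hartshorne--Lichtenbaum theorem and Ogus's result cited before Theorem~3.1, one obtains $\cd(S_{\mathfrak{p}}, \mathfrak{a}_{\mathfrak{p}}) \leq \height \mathfrak{p} - 2$ whenever $\height \mathfrak{p} \geq c + 2$, together with the trivial bound $\cd \leq \height \mathfrak{p}$ at smaller heights. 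Part~(2) uses the analogous local bound $\cd(S_{\mathfrak{p}}, \mathfrak{a}_{\mathfrak{p}}) \leq \height \mathfrak{p} - 3$ for $\height \mathfrak{p} \geq c+3$, supplied by Corollary~3.2 applied to $(S/\mathfrak{a})_{\mathfrak{p}}$.

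With these in hand, I would take $\ell = c$ and $\ell' = 2c$ in part~(1) (respectively $\ell = c$, $\ell' = 3c$ in part~(2)). Hypothesis~(1) of Theorem~3.1 holds because $f(c) = c$ by construction, and hypothesis~(2) is vacuous since no prime of height strictly less than $c$ contains $\mathfrak{a}$. Hypothesis~(3) is a direct numerical check on the short interval $c \leq h \leq \ell'$ using the local bounds above. For hypothesis~(4), applied at $r \geq \ell' + 1$ with $1 \leq s \leq c-1$, the required inequality $f(r-s-1) \leq f(r) - s$ reduces to $\lfloor (r-2)/c\rfloor - \lfloor (r-s-3)/c\rfloor \leq 1$, which is immediate since the numerators differ by $s + 1 \leq c$. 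Theorem~3.1 then yields $\cd(S,\mathfrak{a}) \leq f(n)$, which is the desired bound once one checks that in the relevant range the maximum defining $f(n)$ is attained by the floor-function expression.

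The main obstacle I anticipate is calibrating the $\max$-with-$c$ clamp so that the small-height regime, where $f$ is forced to equal $c$, meshes with the main regime while preserving the monotonicity of $f$ and hypothesis~(4); once the correct threshold for $\ell'$ is identified, the remaining verifications are elementary floor-function arithmetic. The structure is entirely parallel in parts~(1) and~(2), with the only essential input beyond $(S_\ell)$ being the availability of the sharper depth-$3$ bound from Corollary~3.2.
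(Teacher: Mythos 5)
Your plan correctly identifies the overall strategy: apply Theorem~3.1 (Huneke--Lyubeznik) with a function~$f$ chosen so that $f(n)$ is the desired bound. This is exactly what the paper does in its proof of the generalization (Theorem~3.4, with $j=0$ recovering this statement). Your reduction of hypothesis~(4) to $\lfloor (r-2)/c\rfloor - \lfloor (r-s-3)/c\rfloor \leq 1$ is correct, and your handling of the max-with-$c$ clamp to secure hypothesis~(1) with $\ell = c$ (where the paper instead takes $\ell = c+1$ to sidestep the clamp) is a legitimate variant. Part~(1) as you set it up, with $\ell = c$, $\ell' = 2c$, checks out: for $c\leq \height\mathfrak{p}\leq 2c$ one has $\lfloor(\height\mathfrak{p}-2)/c\rfloor\leq 1$, so $f(\height\mathfrak{p})\geq \height\mathfrak{p}-2$, matching the Hartshorne--Lichtenbaum/Ogus bounds.

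The gap is in part~(2), and it is not where you anticipated it. Your choice $\ell' = 3c$ makes hypothesis~(3) of Theorem~3.1 fail once $c\geq 3$. Hypothesis~(3) requires $\cd(S_\mathfrak{p},\mathfrak{a}_\mathfrak{p}) \leq f(\height\mathfrak{p})$ for all primes $\mathfrak{p}\supset\mathfrak{a}$ with $c\leq\height\mathfrak{p}\leq 3c$. For $\height\mathfrak{p} = h$ in the range $2c+3\leq h\leq 3c$ one has $\lfloor(h-3)/c\rfloor = 2$, so $f(h) = h-4$ (the $\max$ clamp is irrelevant here since $h-4\geq c$). But $(S_3)$ only gives $\depth (S/\mathfrak{a})_\mathfrak{p}\geq 3$, hence via Corollary~3.2 only $\cd(S_\mathfrak{p},\mathfrak{a}_\mathfrak{p})\leq h-3$, which is one too large. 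Concretely, for $c=3$, $h=9$: $f(9)=5$ but the available bound is $\cd\leq 6$. The obstacle you flagged --- calibrating the clamp against monotonicity and hypothesis~(4) --- is fine; hypothesis~(4) only sees $r\geq\ell'+1$ and $r-s-1\geq\ell'+1-c\geq c+2$, which is already past the clamp. The real calibration is the \emph{upper} end of $\ell'$: hypothesis~(3) forces $\ell'\leq 2c+2$ (so that $\lfloor(h-3)/c\rfloor\leq 1$ throughout the interval), while hypothesis~(4) forces $\ell'\geq 2c+1$ or so; the paper takes $\ell' = 2c+2$, which is what you should take. With $\ell'=2c+2$ and your clamped $f$, the rest of your verification goes through.
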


In this paper, we generalize this theorem to consider $(S_2^j)$ and $(S_3^j)$ for any $j$.

\begin{theorem}
Let $S$ be an $n$-dimensional regular local ring containing a field and let $\mathfrak{a} \subset S$ be a pure ideal of height $c$.

($1$) If $S/\mathfrak{a}$ satisfies Serre's condition $(S_2^j)$ and $\dim S/\mathfrak{a} \geq 1+j$, then

\begin{center}
$\cd (S,\mathfrak{a}) \leq n-1-\lfloor \frac{n-2-j}{c} \rfloor$
\end{center}

($2$) Suppose that $S$ is essentially of finite type over a field. If $S/\mathfrak{a}$ satisfies Serre's condition $(S_3^j)$ and $\dim S/\mathfrak{a} \geq 2+j$, then

\begin{center}
$\cd (S,\mathfrak{a}) \leq n-2-\lfloor \frac{n-3-j}{c} \rfloor$
\end{center}

\end{theorem}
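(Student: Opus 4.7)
Proof proposal: The plan is to mimic the Dao-Takagi argument by invoking the Huneke-Lyubeznik machinery with functions $f$ tailored to the bounds in the statement. Since $\mathfrak{a}$ is pure, every prime $\mathfrak{p} \supset \mathfrak{a}$ of height $h$ satisfies $\dim(S/\mathfrak{a})_\mathfrak{p} = h - c$, so the hypothesis $(S_\ell^j)$ translates into $\depth(S/\mathfrak{a})_\mathfrak{p} \geq \min\{\ell, h - c - j\}$. This makes the local cohomological dimension estimable by Hartshorne-Lichtenbaum, Ogus, and Corollary 3.2 once $h$ is sufficiently larger than $c+j$.

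For part (1), I would take
\[ f(r) \;=\; r - 1 - \left\lfloor \tfrac{r-2-j}{c} \right\rfloor, \qquad \ell = c+j+1, \qquad \ell' = 2c+j, \]
and verify conditions (1)--(4) of Theorem 3.1. Condition (1) is $f(\ell) = c+j \geq c$. For condition (2), any prime $\mathfrak{p}$ of height $h \leq c+j$ satisfies $\cd(S_\mathfrak{p},\mathfrak{a}_\mathfrak{p}) \leq h \leq c+j$, and a quick calculation gives $f(\ell'+1) - c + 1 = c+j$. For condition (3), on the range $c+j+1 \leq h \leq 2c+j$, the hypothesis $(S_2^j)$ forces $\depth(S/\mathfrak{a})_\mathfrak{p} \geq \min\{2, h-c-j\}$, so Hartshorne-Lichtenbaum (if $h = c+j+1$) or Ogus (if $h \geq c+j+2$) yields $\cd(S_\mathfrak{p},\mathfrak{a}_\mathfrak{p}) \leq h-1$, which matches $f(h) = h-1$ since the floor vanishes on this range. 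Condition (4) reduces, upon writing $r-2-j = qc+t$ with $0 \leq t < c$, to a short case split on whether $t \geq s+1$ or $t \leq s$. The Huneke-Lyubeznik theorem then delivers $\cd(S,\mathfrak{a}) \leq f(n)$ whenever $n \geq c+j+1$, which is precisely the hypothesis $\dim S/\mathfrak{a} \geq 1+j$.

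For part (2), the same template applies with
\[ f(r) \;=\; r - 2 - \left\lfloor \tfrac{r-3-j}{c} \right\rfloor, \qquad \ell = c+j+3, \qquad \ell' = 2c+j+2, \]
using Corollary 3.2 in place of Ogus to bound $\cd$ at primes with $h \geq c+j+3$, where $(S_3^j)$ supplies depth at least $3$. The verification parallels part (1); the key arithmetic identity is again $f(\ell'+1) - c + 1 = c+j$, matching the uniform bound $\cd(S_\mathfrak{p},\mathfrak{a}_\mathfrak{p}) \leq c+j$ available at all primes of height $\leq c+j+2$. One edge case falls just outside the Huneke-Lyubeznik hypothesis, namely $n = c+j+2$; here I would argue directly that $(S_3^j)$ at $\mathfrak{m}$ forces $\depth S/\mathfrak{a} \geq 2$, so Ogus gives $\cd(S,\mathfrak{a}) \leq n-2 = c+j$, which equals the claimed bound $n-2-\lfloor (c-1)/c \rfloor$.

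The main obstacle is locating the correct triple $(f,\ell,\ell')$: too small an $\ell'$ makes condition (2) fail, since $f(\ell'+1) - c + 1$ must dominate the trivial $\cd$-estimate at small-height primes, while too large an $\ell'$ puts condition (3) out of reach because the depth hypothesis stops being strong enough relative to $f$. The choices $\ell' = 2c+j$ (resp.\ $2c+j+2$) hit the sweet spot where condition (2) holds with equality and condition (3) sits in the range where the floor in $f$ takes the constant value $0$ (resp.\ $1$). Once these choices are made, the remaining work is mechanical floor arithmetic for condition (4) together with the small edge case in part (2).
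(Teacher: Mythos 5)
Your approach is the same as the paper's: apply the Huneke--Lyubeznik bound (Theorem 3.1) with the same function $f$, verifying the four hypotheses via the trivial $\cd$-bound, Hartshorne--Lichtenbaum, Ogus, and Corollary 3.2. The only deviations are in your choice of $(\ell,\ell')$. In part (1) you take $\ell'=2c+j$ while the paper uses $2c+j+1$; both satisfy the arithmetic, and this makes no difference. In part (2) you take $\ell=c+j+3$ whereas the paper takes $\ell=c+j+2$, which is why you are forced to treat $n=c+j+2$ as a separate edge case; with the paper's $\ell=c+j+2$ the Huneke--Lyubeznik hypothesis $n\ge\ell$ coincides exactly with the assumption $\dim S/\mathfrak{a}\ge 2+j$, so no edge case arises. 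One small slip in your verification of part (1), condition (3): the floor $\lfloor(h-2-j)/c\rfloor$ does not vanish on the whole range $c+j+1\le h\le 2c+j$ --- it equals $1$ for $h\ge c+j+2$, so $f(h)=h-2$ there, not $h-1$; but Ogus gives $\cd\le h-2$ in that subrange, so the inequality $\cd\le f(h)$ still holds and the argument is unaffected.
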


\begin{proof}
For ($1$) we apply Theorem $3.1$ with $f(m) = m-1-\lfloor \frac{m-2-j}{c} \rfloor$, $\ell=c+1+j$, and $\ell' = 2c+1+j$.  Condition (2) of Theorem $3.1$ becomes $\cd (S_\mathfrak{p}, \mathfrak{a}_\mathfrak{p}) \leq c+j$ for all primes of height at most $c+j$.  This follows since $\cd (S_\mathfrak{p},\mathfrak{a}_\mathfrak{p}) \leq \height \mathfrak{p} \leq c+j$.  Condition (3) of Theorem $3.1$ breaks into two cases.  If $\height \mathfrak{p} = \ell$ then we get $\cd (S_\mathfrak{p},\mathfrak{a}_\mathfrak{p}) \leq \height \mathfrak{p} -1$.  This is true, since $\depth S_\mathfrak{p}/\mathfrak{a}S_\mathfrak{p} \geq \min \{2, c+j+1-c-j\} = 1$.  If $\height \mathfrak{p} > \ell$ then we get $\cd (S_\mathfrak{p},\mathfrak{a}_\mathfrak{p}) \leq \height \mathfrak{p} -2$.  This is true, since $\depth S_\mathfrak{p}/\mathfrak{a}S_\mathfrak{p} \geq 2$.  

For ($2$) we apply Theorem $3.1$ with $f(m) = m-2-\lfloor \frac{m-3-j}{c} \rfloor$, $\ell=c+2+j$, and $\ell' = 2c+2+j$.  Condition (2) of Theorem $3.1$ breaks into two cases.  If $\height \mathfrak{p} = \ell -1$, then $\cd (S_\mathfrak{p},\mathfrak{a}_\mathfrak{p}) \leq \height \mathfrak{p} -1$.  This is true since $\depth S_\mathfrak{p}/\mathfrak{a}S_\mathfrak{p} \geq 1$.  Otherwise, (2) becomes $\cd (S_\mathfrak{p},\mathfrak{a}_\mathfrak{p}) \leq \height \mathfrak{p}$, which is true.  Demonstrating condition ($3$) of Theorem $3.1$ has two parts.  If $\height \mathfrak{p} = \ell = c+2+j$, then $\depth S/\mathfrak{a} \geq 2$, and thus we have $cd (S,\mathfrak{a}) \leq \height \mathfrak{p} -2 = f(\height \mathfrak{p})$. If $c+3+j \leq \height \mathfrak{p} \leq 2c+2+j$, then $f(\height \mathfrak{p}) = \height \mathfrak{p} -3$.  However, $\depth S_\mathfrak{p}/\mathfrak{a}S_\mathfrak{p} \geq 3$.  Therefore, Corollary $3.2$ implies $\cd (S_\mathfrak{p}, \mathfrak{a}_\mathfrak{p}) \leq \height \mathfrak{p} -3$.
\end{proof}


\begin{cor}
Let $S$ be an $n$-dimensional regular local ring containing a field and let $\mathfrak{a} \subset S$ be a pure, square-free monomial ideal of height $c$.

($1$) If $S/\mathfrak{a}$ satisfies Serre's condition $(S_2^j)$ and $\dim S/\mathfrak{a} \geq 1+j$, then

\begin{center}
$\pd S/\mathfrak{a} \leq n-1-\lfloor \frac{n-2-j}{c} \rfloor$
\end{center}

($2$) Suppose that $S$ is essentially of finite type over a field. If $S/\mathfrak{a}$ satisfies Serre's condition $(S_3^j)$ and $\dim S/\mathfrak{a} \geq 2+j$, then

\begin{center}
$\pd S/\mathfrak{a} \leq n-2-\lfloor \frac{n-3-j}{c} \rfloor$
\end{center}

\end{cor}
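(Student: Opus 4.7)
The plan is to deduce the projective dimension bounds by combining Theorem 3.4 with Lyubeznik's theorem identifying projective dimension with cohomological dimension for square-free monomial ideals.

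First, since a pure square-free monomial ideal is a pure ideal of height $c$, Theorem 3.4 applies directly and yields $\cd(S,\mathfrak{a}) \leq n-1-\lfloor (n-2-j)/c \rfloor$ in case (1) (under the hypothesis $(S_2^j)$ and $\dim S/\mathfrak{a} \geq 1+j$), and $\cd(S,\mathfrak{a}) \leq n-2-\lfloor (n-3-j)/c \rfloor$ in case (2) (under the hypothesis $(S_3^j)$, the essentially finite type assumption, and $\dim S/\mathfrak{a} \geq 2+j$).

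Next, I would invoke Lyubeznik's theorem (see \cite{Ly93}), which asserts that for a square-free monomial ideal $\mathfrak{a}$ in a polynomial ring over a field, $\pd S/\mathfrak{a} = \cd(S,\mathfrak{a})$. Substituting this equality into the bounds from the previous step gives exactly the two desired inequalities.

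The main obstacle is verifying that Lyubeznik's formula applies in the stated regular local setting rather than a polynomial ring setting. This is routine: a square-free monomial ideal involves only finitely many variables, so we may assume $S$ is the localization at the homogeneous maximal ideal of the polynomial ring $\mathbb{K}[x_1,\dots,x_n]$, and the inclusion of this polynomial ring into $S$ is faithfully flat. Projective dimension is preserved under faithfully flat extensions, and for any module $M$ one has $H^i_I(M)\otimes_R R' \cong H^i_{IR'}(M\otimes_R R')$ under faithfully flat $R \to R'$, so cohomological dimension is preserved as well. Hence Lyubeznik's equality transfers to our setting, and the corollary follows by substitution.
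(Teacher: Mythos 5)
Your proposal is correct and uses exactly the paper's argument: apply Theorem 3.4 to bound $\cd(S,\mathfrak{a})$, then invoke Lyubeznik's identification $\cd(S,\mathfrak{a}) = \pd S/\mathfrak{a}$ for square-free monomial ideals (the paper cites this as \cite{Ly83} rather than \cite{Ly93}, but the result is the same). The extra paragraph you include to justify transferring Lyubeznik's equality from the polynomial-ring setting to the regular local setting via faithfully flat base change is a reasonable addition that the paper leaves implicit.
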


\begin{proof}

When $\mathfrak{a}$ is a square-free monomial ideal, $\cd (S,\mathfrak{a}) = \pd S/\mathfrak{a}$ \cite{Ly83}. Therefore, applying Theorem $3.4$ we get the desired bounds.
\end{proof}

\section{A generalized Hochster-Huneke graph}

For this section we consider two kinds of rings, local rings and rings which are a quotient of a polynomial ring and a homogeneous ideal.  When we say ring, we shall mean these types of rings unless otherwise noted.

The Hochster-Huneke graph is a graphical representation of the minimal prime ideals of a ring R.  This graph is sometimes called the dual graph of $\Spec R$ and has been examined from many perspectives throughout the literature (cf \cite{HH94,BB15,BV15,Ho16}).  The connectivity of the Hochster-Huneke graph and its localizations provides information about algebraic properties of $R$.  We will consider a generalization of the Hochster-Huneke graph.

 
\begin{defn}[\cite{HH94}]
Let $G (R)$ be the graph with $V(G (R)) = \{ v_i = P_i \}$ where the $P_i$ are the minimal primes of $R$, $E(G (R)) = \{ (v_k, v_\ell) | \height (\mathfrak{p}_k + \mathfrak{p}_\ell) = 1 \}$. Then $G(R)$ is the $\textit{Hochster-Huneke}$ $\textit{graph}$ of $R$.
\end{defn}

\begin{defn}
A ring is \textit{locally connected} if for any $\mathfrak{p} \in \Spec R$, $G(R_\mathfrak{p})$ is connected.
\end{defn}

\begin{theorem}[{\cite[Theorem 6.1.5]{Ku08}, \cite[Theorem 2.15]{Ho16}}]
Let $R$ be a Stanley-Reisner ring. Then $R$ satisfies $(S_2)$ if and only if $R$ is locally connected.
\end{theorem}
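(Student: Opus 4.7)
The plan is to reduce the equivalence, via the face-by-face localization correspondence described in the Remark of Section~1, to a statement about a single simplicial complex at its irrelevant maximal ideal, and then apply two classical ingredients: Hartshorne's connectedness theorem for the forward direction, and Hochster's formula for local cohomology for the reverse direction.

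First I would observe that both conditions are testable link-by-link. By the Remark, a prime $\mathfrak{p}\in\Spec R$ (necessarily monomial) corresponds to a face $F$ of $\Delta$, and $R_\mathfrak{p}$ agrees with the Stanley-Reisner ring of $\lk_\Delta F$ localized at its unique maximal homogeneous ideal. Consequently, $\depth R_\mathfrak{p}\geq\min\{2,\dim R_\mathfrak{p}\}$ becomes the analogous depth condition on $\mathbb{K}[\lk_\Delta F]$ at its irrelevant maximal ideal, and $G(R_\mathfrak{p})=G(\mathbb{K}[\lk_\Delta F])$. Hence it suffices to prove, for any simplicial complex $\Sigma$ with Stanley-Reisner ring $T=\mathbb{K}[\Sigma]$, that $T$ localized at its irrelevant maximal ideal has depth at least $\min\{2,\dim T\}$ if and only if $G(T)$ is connected.

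For the direction $(S_2)\Rightarrow$ locally connected, I would invoke the classical fact (going back to Hartshorne) that a catenary Noetherian local ring satisfying $(S_2)$ is equidimensional and $\Spec$ is connected in codimension one. For the Stanley-Reisner ring $T$, ``connected in codimension one'' of $\Spec T$ translates directly into: any two facets of $\Sigma$ can be joined by a sequence of facets in which consecutive members intersect in codimension one, which is precisely connectedness of $G(T)$. For the reverse direction, I would verify the depth condition by computing local cohomology via Hochster's formula: $H^i_{\mathfrak m}(T)$ decomposes as a direct sum, indexed by squarefree multidegrees supported on faces $G$ of $\Sigma$, of reduced simplicial cohomology groups $\tilde H^{\,i-|G|-1}$ of certain subcomplexes of $\lk_\Sigma G$. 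Connectedness of the dual graph of every link $\lk_\Sigma G$ forces the vanishing of the relevant $\tilde H^0$ pieces and hence $H^0_{\mathfrak m}(T)=H^1_{\mathfrak m}(T)=0$, giving $\depth T\geq 2$.

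The main obstacle will be the reverse direction: one must verify that connectedness of the \emph{dual} graph at every link — rather than the stronger notion of topological connectedness of every link — is precisely what forces the specific $\tilde H^0$ vanishings appearing in Hochster's formula. This requires distinguishing top-dimensional connectedness (governed by the dual graph) from lower-dimensional connectivity and keeping careful track of the pure-dimensional contributions. The forward direction, by contrast, is essentially a direct translation of Hartshorne's connectedness theorem into the combinatorics of facet intersections.
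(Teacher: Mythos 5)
The paper does not prove this theorem; it is quoted verbatim with citations to \cite{Ku08} and \cite{Ho16}, so there is no in-paper argument to compare against. Your reconstruction nonetheless uses the standard ingredients that appear in those sources: reduction to links via the monomial-localization dictionary, Hartshorne's connectedness-in-codimension-one theorem for $(S_2)\Rightarrow$ locally connected, and Hochster's formula for the converse. That is the right framework.

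One substantive misstatement: in your final paragraph you describe topological connectedness of a link as ``stronger'' than connectedness of its dual graph, and locate the main difficulty in bridging a gap from the weaker hypothesis to a stronger conclusion. The comparison runs the other way. For a pure complex, connectedness of the Hochster--Huneke (dual) graph strictly implies topological connectedness --- a path of facets meeting in codimension one is in particular a path in the complex --- while two facets joined only at a low-dimensional face give a complex that is topologically connected but has a disconnected dual graph. Consequently the converse direction is \emph{not} where one must strain to upgrade connectivity: given dual-graph connectedness of every link you get topological connectedness of every link (of dimension $\geq 1$) essentially for free, and Hochster's formula then kills $H^0_{\mathfrak m}$ and $H^1_{\mathfrak m}$ at every localization, which is exactly $(S_2)$.

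The genuine subtlety, which your sketch does not isolate, is \emph{purity}. The height computations that make ``dual graph connected'' mean ``facets meet in codimension one'' are only transparent when the complex (and every link) is pure, i.e.\ when each $R_{\mathfrak p}$ is equidimensional so that $\height(\mathfrak p_k+\mathfrak p_\ell)=\dim R_{\mathfrak p}-|F_k\cap F_\ell|$. In the forward direction, purity comes from Hartshorne: a catenary local ring satisfying $(S_2)$ is equidimensional, so every $\lk_\Delta F$ is pure and the translation is clean. In the reverse direction you must first argue that local connectedness of all the dual graphs \emph{forces} purity --- for instance, a $0$-dimensional facet in a link of dimension $\geq 1$ gives a minimal prime whose sum with every other minimal prime is the maximal ideal, of height $\geq 2$, so its vertex is isolated in the dual graph and the graph is disconnected. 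Making that step explicit, and then running Hochster's formula on the now-pure links, closes the proof. Your phrase about ``keeping careful track of the pure-dimensional contributions'' gestures at this but does not identify it as the purity bootstrap; spelling it out is what turns the sketch into a proof.
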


We desire to make a generalized Hochster-Huneke graph in order to make an analogue of the previous theorem for rings satisfying $(S_2^j)$.  We note that our generalization is the same generalization given in \cite{NS17}.

\begin{defn}
Let $G^j(R)$ be the graph with $V(G^j(R)) = \{v_i = P_i \}$ where the $\mathfrak{p}_i$ are the minimal primes of $R$, $E(G^j(R)) = \{ (v_k, v_\ell) | 1 \leq \height (\mathfrak{p}_k + \mathfrak{p}_\ell) \leq j \}$. We note $G^1(R)$ is the Hochster-Huneke graph of $R$.
\end{defn}


\begin{defn}
A ring is \textit{$j$-locally connected} if for any $\mathfrak{p} \in \Spec R$, $G^{j}(R_\mathfrak{p})$ is connected.
\end{defn}

\begin{theorem}
Let $R$ be a Stanley-Reisner ring. Then, $R$ satisfies $(S_2^j)$ if and only if $R$ is $j+1$-locally connected.
\end{theorem}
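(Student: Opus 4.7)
The plan is to translate both sides of the equivalence into combinatorial conditions on the complex $\Delta$ via the Remark preceding this theorem: a prime $\mathfrak{p} = P_F$ of $R$ corresponds to a face $F \in \Delta$, the localization $R_\mathfrak{p}$ identifies with the Stanley-Reisner ring of $\lk_\Delta F$ at its maximal homogeneous ideal, minimal primes of $R_\mathfrak{p}$ correspond to facets of $\lk_\Delta F$, and (in the equidimensional case) $\height(P_{G_1}+P_{G_2}) = \dim R_\mathfrak{p} - |G_1 \cap G_2|$. By Hochster's formula, whenever $\dim R_\mathfrak{p} \geq 2$, $\depth R_\mathfrak{p} \geq 2$ is equivalent to $\lk_\Delta F$ being connected and having no isolated vertex. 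I split each direction into the two cases $\dim R_\mathfrak{p} \leq j+1$ and $\dim R_\mathfrak{p} \geq j+2$.

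For the backward direction, when $\dim R_\mathfrak{p} \leq j+1$ the condition $(S_2^j)$ is vacuous or reduces to $\depth R_\mathfrak{p} \geq 1$, which holds automatically for Stanley-Reisner rings of positive dimension. When $\dim R_\mathfrak{p} \geq j+2$ we must show $\depth R_\mathfrak{p} \geq 2$. Suppose instead that $\lk_\Delta F$ is disconnected; pick facets $G_1, G_2$ in distinct components so that $G_1 \cap G_2 = \emptyset$. Then $P_{G_1}+P_{G_2}$ equals the maximal ideal of $R_\mathfrak{p}$, which has height $\dim R_\mathfrak{p} \geq j+2 > j+1$. Consequently any edge of $G^{j+1}(R_\mathfrak{p})$ joins facets that share at least one vertex, so each connected component of $G^{j+1}(R_\mathfrak{p})$ is contained in a single component of $\lk_\Delta F$, contradicting the hypothesis. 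The isolated-vertex case is analogous.

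For the forward direction, when $\dim R_\mathfrak{p} \leq j+1$ any two distinct minimal primes sum to an ideal of height in $[1, \dim R_\mathfrak{p}] \subseteq [1, j+1]$, so $G^{j+1}(R_\mathfrak{p})$ is the complete graph. When $\dim R_\mathfrak{p} \geq j+2$, the hypothesis $(S_2^j)$ applied at all primes of $R$ yields ``Property $(*)$'': for every face $F' \in \Delta$ with $\dim \lk_\Delta F' \geq j+1$, the link $\lk_\Delta F'$ is connected. I then prove, by induction on $d - j - 1 - |G_1 \cap G_2|$ with $d = \dim R_\mathfrak{p}$, that Property $(*)$ forces any two facets $G_1, G_2$ of $\lk_\Delta F$ to be joined by a chain in $G^{j+1}(R_\mathfrak{p})$. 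In the inductive step with $|G_1 \cap G_2| \leq d - j - 2$, the link $\lk_\Delta(F \cup (G_1 \cap G_2))$ has dimension $\geq j+1$ and is connected by Property $(*)$; a vertex-chain between the images of $G_1$ and $G_2$ in this link lifts to a sequence of facets of $\Delta$ all containing $F \cup (G_1 \cap G_2)$, whose consecutive terms share strictly more vertices than $|G_1 \cap G_2|$, reducing to the inductive hypothesis.

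The main obstacle is the forward direction's inductive lemma, which upgrades global link-connectedness (Property $(*)$) into connectedness of the refined dual graph $G^{j+1}$. The non-equidimensional case demands extra bookkeeping, since $\height(P_{G_1}+P_{G_2})$ is controlled by the largest facet containing $G_1 \cap G_2$ rather than simply by $\dim R_\mathfrak{p}$; however, the critical dichotomy — whether two facets share a vertex or not — still governs whether $P_{G_1}+P_{G_2}$ reaches the maximal ideal, so the same arguments apply with straightforward modifications.
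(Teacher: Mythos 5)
Your proposal is correct, and the backward direction (from $(j+1)$-locally connected to $(S_2^j)$) runs along the same lines as the paper: edges of $G^{j+1}(R_\mathfrak{p})$ cannot join vertex-disjoint facets when $\dim R_\mathfrak{p}\geq j+2$, so connectedness of $G^{j+1}(R_\mathfrak{p})$ forces $\lk_\Delta F$ to be connected (and to lack isolated vertices), which by Hochster's formula gives $\depth R_\mathfrak{p}\geq 2$ exactly in the range where $(S_2^j)$ requires it. The genuine difference is in the forward direction. The paper disposes of $(S_2^j)\Rightarrow$ $(j+1)$-locally connected in one line by invoking Hartshorne's Corollary~2.3 (connectedness in codimension). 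You instead prove it from scratch: you extract Property~$(*)$ (links of faces of dimension at least $j+1$ are connected) and then run an induction on $d-j-1-|G_1\cap G_2|$, passing to the link of $F\cup(G_1\cap G_2)$, using its connectedness to produce a vertex path, and lifting the edges of that path to facets that pairwise share strictly more vertices, so that the inductive hypothesis joins consecutive facets in $G^{j+1}(R_\mathfrak{p})$. This is a self-contained combinatorial proof of the specific instance of Hartshorne's theorem needed here, and it makes the mechanism transparent; the price is length, plus it commits you to the explicit height formula $\height(P_{G_1}+P_{G_2})=\dim R_\mathfrak{p}-|G_1\cap G_2|$, which is a pure-complex fact.

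That last point is the one place I would push back on your confidence. In the non-pure case, $\height(P_{G_1}+P_{G_2})=\max\{|G|-|G_1\cap G_2| : G\text{ a facet of }\lk_\Delta F\text{ with }G\supseteq G_1\cap G_2\}$, so the base case of your induction (``$|G_1\cap G_2|\geq d-j-1$ implies adjacency'') can fail if $G_1\cap G_2$ sits inside a large facet, and the inductive step's lower bound $\dim\lk_\Delta(F\cup(G_1\cap G_2))\geq j+1$ also uses purity. Calling these ``straightforward modifications'' overstates it; the non-pure bookkeeping is real, and it is precisely what citing Hartshorne lets the paper avoid. If you want a fully self-contained proof you should either restrict the combinatorial lemma to the pure case (and note, as the paper does elsewhere, that the support-of-$\Ext$ reduction lets you test $(S_2^j)$ only on monomial primes) or carry the facet-size dependence through the induction explicitly.
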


\begin{proof}
By Theorem $2.1$, we have $R$ satisfying $(S_2^j)$ can be viewed as a condition on $\Supp \Ext _S^{n-i}(R,S)$. From [Ya00], we have that $R$ is a Stanley-Reisner ring implies that $\Ext _S^{n-i}(R, S)$ is a square-free module. Thus, $\Ext _S^{n-i}(R, S)$ is uniquely determined by its prime ideals generated by variables. Therefore, we only need to consider primes generated by variables when showing $R$ satisfies $(S_2^j)$.

Assume $R$ is $(j+1)$-locally connected and $\dim R = d$. If $j \geq d-1$, then $1 \geq d-j$.  All Stanley-Reisner rings are $(S_1)$, thus this case is trivially true.

Now we suppose $j < d-1$. When $d > j+1$, $G^{j+1}(R)$ being connected implies $\Delta_R$ is connected. Thus, for all localizations of $R$ such that $\dim R_\mathfrak{p} > j+1$ we have that $\Delta_{R_{\mathfrak{p}}}$ is connected. Therefore, we have that $\depth R_\mathfrak{p} \geq 2$, for all $\mathfrak{p} \in \Spec R$ with $\dim R_\mathfrak{p} \geq 2+j$. This combined with the fact that all Stanley-Reisner rings satisfy $(S_1)$ gives that $R$ satisfies $(S_2^{j})$.

Now suppose $R$ satisfies $(S_2^{j})$. By Corollary 2.3 of \cite{Ha62}, we have that $R$ is locally connected in codimension $1+j$.  Thus $G^{j+1}(R_\mathfrak{p})$ is connected for all $\mathfrak{p} \in \Spec R$.

\end{proof}

\section{Resolution of the Alexander Dual}

Let us use notation from \cite{FM14} to introduce Alexander Duality. Let $\mathbb{K}$ be a field and $S=\mathbb{K}[x_1,...,x_n]$.  Let $m$ be a monomial. Let $\mathfrak{p}_m = (x_i : x_i | m)$.

A prime ideal $\mathfrak{p}$ is an associated prime of a square free monomial ideal $I$ if $\mathfrak{p} \supseteq I$ and $\mathfrak{p}$ is minimal among primes ideals that contain $I$. The set of all associated primes of $I$ is written $\Ass (I)$.

\begin{defn}
Let $I$ be a squarefree monomial ideal. The \textit{Alexander dual} of $I$ is \[ I^{\vee} = (m : \mathfrak{p}_m \in \Ass(I)). \]
\end{defn}

In \cite{ER98}, Eagon and Reiner proved that a Stanley-Reisner ring $R/I$ is Cohen-Macaulay if and only if $I^\vee$ has linear resolution.  In \cite{Ya00}, Yanagawa generalized their result to say a Stanley-Reisner ring $R=S/I$ satisfies $(S_{\ell})$ if and only if $I^\vee$ has syzygy matrices with linear entries up to homological degree $\ell-1$ \cite{Ya00}. We further generalize these statements for equidimensional Stanley-Reisner rings satisfying $(S_{\ell} ^j)$.

\begin{defn}
Let $R = S/I$ be an equidimensional Stanley-Reisner ring.  We say that $I^{\vee}$ satisfies $(N_{c,\ell}^j)$ if \[ [ \Tor_{\gamma}(I^\vee,\mathbb{K})]_{\beta} = 0 \textrm{ for all } \gamma < \ell \textrm{ and for all } c+j+\gamma < \beta \leq n. \]
\end{defn}
\begin{theorem}
Let $R=S/I$ be a $d$-dimensional, equidimensional Stanley-Reisner ring with codimension $c$. Then the following are equivalent for $\ell \geq 2$:
\begin{enumerate}[(i)]
\item $R$ satisfies $(S_{\ell}^j)$.
\item $I^\vee$ satisfies $(N_{c,\ell}^j)$.
\end{enumerate}
\end{theorem}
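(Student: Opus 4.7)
The plan is to combine Corollary $2.2$ with Terai's Alexander duality formula, which translates multigraded data for the $\Ext$ modules of $R$ into the multigraded $\Tor$ modules of $I^\vee$. First, since $R$ is equidimensional, Corollary $2.2$ rewrites (i) as the bounds $\dim \Ext_S^{n-i}(R,S) \leq i - \ell$ for $i = 0,\ldots,d-j-1$. Reindexing by $p = n-i$ and using $d = n - c$, this becomes $\dim \Ext_S^p(R,S) \leq n - p - \ell$ for $p = c+j+1, \ldots, n$ (where a non-positive bound means the module vanishes). Because $I$ is squarefree, each $\Ext_S^p(R,S)$ is a squarefree $\mathbb{Z}^n$-graded $S$-module in Yanagawa's sense, and for any such $M$ one has $\dim M = n - \min\{ |\sigma| : \sigma \textrm{ squarefree},\ M_\sigma \neq 0\}$. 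Thus the dimension bound is equivalent to the multigraded vanishing $[\Ext_S^p(R,S)]_\sigma = 0$ for every squarefree $\sigma$ with $|\sigma| < p + \ell$ and every $p \in [c+j+1, n]$.

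Next I would invoke the Terai/Musta\c{t}\u{a} Alexander duality formula: for each squarefree multidegree $\sigma$ of total degree $\beta$, there is a canonical isomorphism of $\mathbb{K}$-vector spaces $[\Ext_S^{\beta - \gamma}(R,\omega_S)]_{-\sigma} \cong [\Tor_\gamma^S(I^\vee,\mathbb{K})]_\sigma$, combined with the twist $\omega_S = S(-\mathbf{1})$ that identifies $\Ext$ into $S$ with $\Ext$ into $\omega_S$ after shifting multidegrees. Setting $\gamma = \beta - p$, the range $p \in [c+j+1, n]$ corresponds exactly to pairs $(\gamma,\beta)$ with $\beta > c + j + \gamma$ and $\beta \leq n$, while $|\sigma| = \beta < p + \ell$ becomes $\gamma < \ell$. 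Collecting the squarefree contributions in each total degree $\beta$ gives $[\Tor_\gamma^S(I^\vee, \mathbb{K})]_\beta$, and the resulting vanishing is precisely $(N_{c,\ell}^j)$. Since every step is reversible, (i) and (ii) are equivalent.

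The main obstacle is the careful bookkeeping of multigraded twists under Alexander duality and verifying that the numerical ranges match. In particular the passage from ``$\dim \Ext_S^p(R,S)$ is bounded'' to ``all squarefree multigraded components in a prescribed range vanish'' relies essentially on the squarefree structure of the $\Ext$ modules and on equidimensionality (which is why we use Corollary $2.2$ rather than Theorem $2.1$ directly); any sloppiness in tracking $\omega_S = S(-\mathbf{1})$ would produce an off-by-one shift and break the correspondence with the defining inequality $c+j+\gamma < \beta$ of $(N_{c,\ell}^j)$. A side check worth performing is the base case $\gamma = 0$: equidimensionality of $I$ forces every minimal generator of $I^\vee$ to have degree exactly $c$, so the constraint $\beta > c + j$ is automatic and the theorem behaves correctly at the lowest homological level.
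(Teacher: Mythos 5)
Your proposal follows essentially the same route as the paper: both start from Corollary~$2.2$, reindex by $p=n-i$ to get $\dim\Ext_S^p(R,S)\leq n-p-\ell$ on the range $c+j<p\leq n$, use the fact that $\Ext_S^p(R,S)$ is a squarefree module to turn the dimension bound into vanishing of the squarefree multigraded components, and then invoke Yanagawa's Alexander duality formula ([Ya00], Theorem~$3.4$ in the paper; the Terai/Musta\c{t}\u{a} formula you cite is the same statement) to translate that vanishing into the graded vanishing of $\Tor_\gamma^S(I^\vee,\mathbb{K})$ defining $(N_{c,\ell}^j)$. The one place to be careful, which you flag yourself, is the sign convention: $\Ext_S^p(R,S)$ sits in nonpositive squarefree multidegrees, so the displayed formula $\dim M = n - \min\{|\sigma|: M_\sigma\neq 0\}$ and the condition $[\Ext_S^p(R,S)]_\sigma = 0$ are correct only if $\sigma$ denotes the negation of the actual multidegree (equivalently, after passing to $\Ext_S^p(R,\omega_S)$ via the twist by $\mathbf{1}$); written literally with $\sigma\geq 0$ the condition is vacuous. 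With that understood, the substitution $\gamma = \beta - p$, $\beta = |\sigma|$ reproduces exactly the paper's $\gamma = h-\alpha$, $\beta = |F^c|$, and the ranges $\gamma<\ell$, $c+j+\gamma<\beta\leq n$ come out the same.
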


\begin{proof}
By Corollary $2.2$, $R$ satisfies $(S_{\ell}^j)$ if and only if
\[ \dim \Ext_S^{n-i}(R,S) \leq i-\ell \textrm{ for all } i = 0,..., d-j-1. \]
We rewrite for convenience:
\[ \dim \Ext_S^{\alpha}(R,S) \leq n-\alpha-\ell \textrm{ for all } c+j < \alpha \leq n.\]
The above is true if and only if for any prime $\mathfrak{p}$ with height $h < \alpha + \ell$ and support $F$ we have:
\[ (\Ext_S^{\alpha}(R,S))_F = 0 \textrm{ for } c+j < \alpha \leq n. \]
By Theorem $3.4$ of [Ya00], this is true if and only if
\[ [\Tor_{h-\alpha}(I^\vee,\mathbb{K})]_{F^c}=0 \textrm{ for } c+j < \alpha \leq n. \]
By rewriting, we recover our definition of $(N_{c,\ell}^j)$.

\end{proof}

\begin{remark}
From this result, we have $R$ satisfies $(S_2^j)$ if and only if all entries of the first syzygy matrix of $R$ have degree at most $j+1$.  Furthermore, $R$ satisfies $(S_\ell^j)$ if the sum of the highest degrees of the first $\ell -1$ syzygy matrices is less than $j+\ell -1$.
\end{remark}

\begin{cor}
Let $\mathfrak{a}$ be a pure, square-free monomial ideal of an $n$-dimensional polynomial ring over a field.

($1$) If $\mathfrak{a}$ satisfies $(N_{c,2}^j)$ and $c \leq n-1$, then

\begin{center}
$\reg \mathfrak{a} \leq n-1-\lfloor \frac{n-2-j}{c} \rfloor$
\end{center}

($2$) If $\mathfrak{a}$ satisfies $(N_{c,3}^j)$ and $c \leq n-2$, then
\begin{center}
$\reg \mathfrak{a} \leq n-2-\lfloor \frac{n-3-j}{c} \rfloor$
\end{center}

\end{cor}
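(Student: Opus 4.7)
The plan is to invoke Terai's duality formula
\[
\reg \mathfrak{a} \;=\; \pd S/\mathfrak{a}^\vee,
\]
valid for every square-free monomial ideal $\mathfrak{a}$, and to relay the hypothesis through Theorem~$5.2$ and Corollary~$3.5$ applied to $I:=\mathfrak{a}^\vee$.

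Setting $I:=\mathfrak{a}^\vee$ and using $(\mathfrak{a}^\vee)^\vee=\mathfrak{a}$, one may view $\mathfrak{a}$ as the Alexander dual $I^\vee$. The purity hypothesis on $\mathfrak{a}$ is intended exactly so that $I$ is a pure square-free monomial ideal of height $c$ and, equivalently, $R:=S/I$ is an equidimensional Stanley--Reisner ring of codimension $c$. Theorem~$5.2$ then converts the assumption ``$\mathfrak{a}=I^\vee$ satisfies $(N_{c,\ell}^j)$'' into the assertion ``$R$ satisfies $(S_\ell^j)$'' for $\ell=2$ in part~$(1)$ and for $\ell=3$ in part~$(2)$.

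Next I would feed this Serre-type condition into Corollary~$3.5$. Assuming the dimension hypothesis $\dim R = n-c \geq 1+j$ in part~$(1)$ (resp. $\dim R \geq 2+j$ in part~$(2)$), which is the natural hypothesis required by Corollary~$3.5$, one obtains
\[
\pd R \;\leq\; n-1-\left\lfloor \frac{n-2-j}{c}\right\rfloor
\qquad\text{and}\qquad
\pd R \;\leq\; n-2-\left\lfloor \frac{n-3-j}{c}\right\rfloor,
\]
respectively. Combining each with Terai's formula produces the stated bounds on $\reg \mathfrak{a}$.

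The main delicate points are verifying that purity of $\mathfrak{a}$ propagates to equidimensionality of $R=S/\mathfrak{a}^\vee$ so that Theorem~$5.2$ applies, and handling the low-dimensional corner cases where $\dim R$ falls just at (or below) the thresholds $1+j$ and $2+j$, for which the floor term becomes non-positive and the inequality must be checked directly from the Hartshorne--Lichtenbaum/Ogus vanishing bounds recalled at the start of Section~$3$. Outside of that bookkeeping, the argument is a straightforward concatenation of results already established in the paper.
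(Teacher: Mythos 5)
Your proposal is essentially the paper's own proof: set $I:=\mathfrak{a}^\vee$, use Theorem~5.3 (you write Theorem~5.2, but you clearly mean the equivalence theorem) to turn $(N_{c,\ell}^j)$ for $\mathfrak{a}=I^\vee$ into $(S_\ell^j)$ for $S/\mathfrak{a}^\vee$, then invoke the Section~3 bound and close with Terai's duality $\reg\mathfrak{a}=\pd S/\mathfrak{a}^\vee=\cd(S,\mathfrak{a}^\vee)$; whether one routes through Corollary~3.5 (your choice) or Theorem~3.4 (the paper's choice) is immaterial, since Corollary~3.5 is Theorem~3.4 plus Lyubeznik's identity $\cd=\pd$ for square-free monomial ideals.

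One caution on your side remark about purity: under Alexander duality the two purity notions swap, so $\mathfrak{a}$ having pure height corresponds to $\mathfrak{a}^\vee$ being generated in a single degree, \emph{not} to $S/\mathfrak{a}^\vee$ being equidimensional; the equidimensionality of $S/\mathfrak{a}^\vee$ that Theorem~5.3 needs is actually built into Definition~5.2 of $(N_{c,\ell}^j)$, so your worry about ``purity propagating'' is aimed at the wrong implication. Your other flagged concern, the low-dimensional edge cases where $\dim S/\mathfrak{a}^\vee$ sits below $1+j$ (resp.\ $2+j$), is a legitimate gap in the hypothesis $c\leq n-1$ (resp.\ $c\leq n-2$) versus the $\dim\geq 1+j$ (resp.\ $2+j$) required by Theorem~3.4; in those cases the floor term is nonpositive and the claimed bound is $\geq n-1$, which holds trivially because $\depth S/\mathfrak{a}^\vee\geq 1$ for any Stanley--Reisner ring of positive dimension, exactly the kind of direct check you anticipate.
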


\begin{proof}

From Theorem 5.3 we get that $S/\mathfrak{a}^\vee$ satisfies $(S^j_2)$, and $\cd (S,\mathfrak{a}^\vee) = \reg \mathfrak{a}$. Therefore, applying Theorem $3.4$ we get the desired bounds.
\end{proof}

\section{A generalization of Reisner's criterion}

Reisner's criterion is a method for checking the Cohen-Macaulayness of a Stanley-Reisner ring by considering the reduced homology groups of the ring's associated complex. The $i$th reduced homology group of $\Delta$ is $\tilde{H}_i(\Delta ; \mathbb{K})$. The following theorem is from \cite{Re76}.

\begin{theorem}[Reisner's Criterion]
Let $\mathbb{K}$ be a field and $\Delta$ be a simplicial complex of dimension $d - 1$. Then $\Delta$ is Cohen-Macaulay over
$\mathbb{K}$ if and only if for every $F \in \Delta$ and for every $i < \dim(\lk_{\Delta}(F))$, $\tilde{H}_i(\lk_{\Delta}(F);\mathbb{K}) = 0$ holds true.
\end{theorem}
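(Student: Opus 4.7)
The plan is to deduce Reisner's Criterion from Hochster's formula, which expresses the $\mathbb{Z}^n$-graded pieces of the local cohomology of a Stanley-Reisner ring as reduced simplicial homology of links. Since $R = \mathbb{K}[\Delta]$ of Krull dimension $d$ is Cohen-Macaulay exactly when $H^i_{\mathfrak{m}}(R) = 0$ for all $i < d$, it suffices to translate this vanishing into a statement about the links of $\Delta$.

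First I would record Hochster's formula in the form
\[ \dim_{\mathbb{K}} H^i_{\mathfrak{m}}(R)_{-a} \;=\; \dim_{\mathbb{K}} \tilde{H}_{\,i - |F| - 1}\bigl(\lk_{\Delta} F;\,\mathbb{K}\bigr) \]
whenever $a \in \mathbb{Z}^n_{\leq 0}$ has support $F \in \Delta$, with every other $\mathbb{Z}^n$-graded piece equal to zero. Consequently, $H^i_{\mathfrak{m}}(R) = 0$ if and only if $\tilde{H}_{i - |F| - 1}(\lk_{\Delta} F; \mathbb{K}) = 0$ for every face $F \in \Delta$.

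Combining these, $R$ is Cohen-Macaulay precisely when $\tilde{H}_{i-|F|-1}(\lk_{\Delta} F;\mathbb{K}) = 0$ for every $F \in \Delta$ and every $0 \leq i < d$. I would then reindex by $k = i - |F| - 1$; using the identity $\dim \lk_{\Delta} F = d - |F| - 1$, the bound $i < d$ becomes $k < \dim \lk_{\Delta} F$, while the remaining constraint $i \geq 0$ only produces indices $k \leq -1$ at which reduced homology vanishes automatically (after accounting for the facet case, where $\lk_{\Delta} F = \{\emptyset\}$ but the offending index $k = -1$ corresponds to the excluded value $i = d$). This recovers exactly the vanishing condition in the theorem.

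The main obstacle is Hochster's formula itself, whose proof is an explicit computation with the Cech complex of $R$ on the variables $x_1, \dots, x_n$: one decomposes each graded strand into summands indexed by squarefree multidegrees and identifies each summand with the reduced chain complex of the corresponding link. This calculation is standard (see, for instance, Bruns--Herzog, Theorem 5.3.8), so I would cite it rather than reprove it; once it is in hand, the remainder of the argument is bookkeeping on degrees.
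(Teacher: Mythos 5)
The paper does not prove this theorem; it is quoted from Reisner's article \cite{Re76}, so there is no internal proof to compare against. Your Hochster-formula route is the standard derivation, but the reindexing step hides a genuine gap. You convert the bound $i < d$ into $k < \dim \lk_\Delta F$ by invoking the identity $\dim \lk_\Delta F = d - |F| - 1$, but that identity holds only when $\Delta$ is pure. What Hochster's formula actually gives is: $\mathbb{K}[\Delta]$ is Cohen--Macaulay if and only if $\tilde{H}_k(\lk_\Delta F; \mathbb{K}) = 0$ for every $F \in \Delta$ and every $k < d - |F| - 1$. For non-pure $\Delta$ this is strictly stronger than the condition in the theorem statement. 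Concretely, if $G$ is a facet with $|G| < d$, then $\lk_\Delta G = \{\emptyset\}$, so $\tilde{H}_{-1}(\lk_\Delta G;\mathbb{K}) \cong \mathbb{K} \neq 0$ and $-1 < d - |G| - 1$, so $\mathbb{K}[\Delta]$ is not Cohen--Macaulay; yet since $\dim \lk_\Delta G = -1$, the hypothesis of the theorem imposes nothing on $G$. Your parenthetical dispatches only facets with $|F| = d$, where $k = -1$ corresponds to the excluded value $i = d$; for a facet of smaller cardinality, $k = -1$ corresponds to $i = |F| < d$, which is in range, and your bookkeeping silently assumes that case does not occur.

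To repair the ``if'' direction you must first show that the stated homological hypothesis forces $\Delta$ to be pure; this is a genuine combinatorial lemma, not mere bookkeeping. The usual argument inducts on $\dim \Delta$: the hypothesis passes to every link since $\lk_{\lk_\Delta F}(G) = \lk_\Delta(F \cup G)$; from $\tilde{H}_0(\Delta) = 0$ one gets that $\Delta$ is connected; the inductive hypothesis gives that every $\lk_\Delta(v)$ is pure; and a path-connectedness argument through the vertex set then shows $\dim \lk_\Delta(v) = d - 2$ for every vertex $v$, whence every facet has cardinality $d$. Once purity is in hand, $\dim \lk_\Delta F = d - |F| - 1$ for all $F$ and your reindexing is correct. (The ``only if'' direction is fine as you wrote it, since a Cohen--Macaulay complex is pure, so the identity you used holds there.)
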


Terai has formulated an analogue of this theorem for Stanley-Reisner rings satisfying $(S_\ell)$ ($\ell \geq 2$). 

\begin{theorem}[\cite{Te07} described after Theorem $1.4$]
Let $\mathbb{K}$ be a field and $\Delta$ be a simplicial complex of dimension $d - 1$. Then $\Delta$ satisfies $(S_\ell)$ over
$\mathbb{K}$ if and only if for every $F \in \Delta$ (including $F = \emptyset$) with $|F| \leq d-i-2$ and for every $-1\leq i \leq \ell-2$, $\tilde{H}_i(\lk_{\Delta}(F);\mathbb{K}) = 0$ holds true.
\end{theorem}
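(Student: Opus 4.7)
The plan is to reduce $(S_\ell)$ for $R=\mathbb{K}[\Delta]$ to a family of depth conditions on Stanley-Reisner rings of links, and then convert each such depth condition into reduced homology vanishings via Hochster's formula for local cohomology of Stanley-Reisner rings.

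First I would pass to primes generated by variables. By Theorem 2.1 (specialized to $j=0$), the condition $(S_\ell)$ is equivalent to a support condition on $\Ext^{n-i}_S(R,S)$, and since these $\Ext$ modules are squarefree $\mathds{Z}^n$-graded (Yanagawa, as invoked in Section 5), their supports are determined by the primes generated by variables that contain them. By the remark in the introduction, localizing $R$ at the prime generated by the variables outside a face $F\in\Delta$ returns the Stanley-Reisner ring of $\lk_\Delta F$ (localized at its maximal homogeneous ideal, via a faithfully flat extension). Hence $R$ satisfies $(S_\ell)$ if and only if for every $F\in\Delta$,
\[ \depth \mathbb{K}[\lk_\Delta F] \;\geq\; \min\{\ell,\,\dim\lk_\Delta F + 1\}. \]

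Next I would apply Hochster's formula: for any complex $\Lambda$,
\[ \dim_{\mathbb{K}} H^i_{\mathfrak{m}}(\mathbb{K}[\Lambda])_{-\sigma} \;=\; \dim_{\mathbb{K}} \tilde{H}_{i-|\sigma|-1}(\lk_\Lambda \sigma;\mathbb{K}) \quad\text{for }\sigma\in\Lambda, \]
and zero in every other squarefree multidegree. Thus $\depth \mathbb{K}[\Lambda]\geq k$ exactly when $\tilde{H}_j(\lk_\Lambda\sigma;\mathbb{K})=0$ for all $\sigma\in\Lambda$ and all $j\leq k-|\sigma|-2$. Specializing to $\Lambda=\lk_\Delta F$, using the identity $\lk_{\lk_\Delta F}(\sigma)=\lk_\Delta(F\cup\sigma)$, and reparametrizing by $G=F\cup\sigma$ with $F\subseteq G\in\Delta$, the condition above transforms to: for every $G\in\Delta$ and every $F\subseteq G$,
\[ \tilde{H}_j(\lk_\Delta G;\mathbb{K})=0 \quad\text{for all } j\leq \min\{\ell,\,\dim\lk_\Delta F + 1\}-(|G|-|F|)-2. \]

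Finally, I would extract Terai's formulation by taking, for each fixed $G$, the strongest of these constraints over $F\subseteq G$. The quantity $\min\{\ell-|G|+|F|-1,\,\dim\lk_\Delta F-|G|+|F|\}$ is nondecreasing in $|F|$ in the pure case (since $\dim\lk_\Delta F=d-|F|-1$), so the maximum is attained at $F=G$, giving the bound $\min\{\ell-2,\,d-|G|-2\}$. Writing $i$ for $j$, this is the range $-1\leq i\leq \ell-2$ subject to $|G|\leq d-i-2$, matching the theorem exactly. The main obstacle will be the non-pure case: when $\dim\lk_\Delta F<d-|F|-1$, the link-of-$F$ contribution is genuinely weaker than Terai's single $F=G$ bound, and one must show that such lower-dimensional pieces either impose vacuous conditions (their links are too small to carry nonzero $\tilde{H}_i$ in the relevant range) or else follow from the $F=G$ vanishings at larger faces through the Mayer--Vietoris sequence of deletion and link. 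Verifying this bookkeeping carefully, so that Terai's condition on $(F,i)$ subsumes all the others, is where I expect the argument to take the most care.
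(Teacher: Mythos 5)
Your approach is essentially the same as the one the paper uses to prove the $(S_\ell^j)$ generalization (Lemma~6.3 and Theorem~6.4): reduce to primes generated by variables via squarefreeness of the $\Ext$ modules, translate localizations into links, apply Hochster's formula, and then reparametrize the face data. So the plan is sound.

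The one place you stop short is the non-pure case, which you flag as ``the main obstacle'' and where you anticipate needing a Mayer--Vietoris argument between deletion and link. That is more work than the situation actually requires. Observe that the $i=-1$ instance of Terai's condition says $\tilde{H}_{-1}(\lk_\Delta(F);\mathbb{K})=0$ for every $F$ with $|F|\leq d-1$; since $\tilde{H}_{-1}$ of a complex is nonzero precisely when the complex is $\{\emptyset\}$, this is exactly the statement that no face of cardinality $\leq d-1$ is a facet, i.e.\ that $\Delta$ is pure of dimension $d-1$. Conversely, a Stanley--Reisner ring satisfying $(S_2)$ is connected in codimension one and hence equidimensional, so $(S_\ell)$ for $\ell\geq 2$ already forces purity. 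Both directions of the equivalence therefore live entirely in the pure world, where your computation that the maximum is attained at $F=G$ is valid, and your worry about ``lower-dimensional pieces'' never materializes. The paper sidesteps this by restricting Theorem~6.4 to pure complexes outright, and by letting Lemma~6.3 carry only the forward implication; your version should simply record the purity observation before the reparametrization step rather than invoking any additional homological machinery.
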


Now we present a lemma, which serves as an analogue to one direction of Reisner's criterion for rings satisfying $(S_\ell^j)$.  Following this Lemma, we prove a full analogue of Reisner's criterion for equidimensional rings satisfying $(S_\ell^j)$.  For the rest of this section, $R$ shall be the Stanley-Reisner ring of $\Delta$ and $\mathfrak{m}$ shall be the unique homogeneous maximal ideal of $R$.

\begin{lemma}
Let $\mathbb{K}$ be a field and $\Delta$ be a simplicial complex of dimension $d - 1$.  For a face $F$ let $d_F$ denote the cardinality of the largest facet containing $F$. Then $\Delta$ satisfying $(S_\ell^j)$ ($\ell \geq 2$) over
$\mathbb{K}$ implies for every $F \in \Delta$ (including $F = \emptyset$) with $|F| \leq d_F-i-j-2$ and for every $-1\leq i \leq \ell-2$, $\tilde{H}_i(\lk_{\Delta}(F);\mathbb{K}) = 0$.

\end{lemma}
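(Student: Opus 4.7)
The plan is to use Hochster's formula together with the paper's own convention that localizing $R$ at a monomial prime $\mathfrak{p}$ is essentially the same as passing to the Stanley-Reisner ring of the corresponding link. Given a face $F \in \Delta$, let $\mathfrak{p}_F = (x_i : i \notin F)$, so that $R_{\mathfrak{p}_F}$ may be identified (via the Remark) with $\mathbb{K}[\lk_\Delta F]$ localized at its homogeneous maximal ideal. The key numerical input is the formula $\dim R_{\mathfrak{p}_F} = d_F - |F|$, which follows because $\dim \mathbb{K}[\lk_\Delta F]$ is the cardinality of its largest facet, namely $d_F - |F|$.

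Next I would apply the hypothesis $(S_\ell^j)$ at the prime $\mathfrak{p}_F$ to extract a depth bound. Since $\Delta$ satisfies $(S_\ell^j)$, we have
\[ \depth R_{\mathfrak{p}_F} \geq \min\{\ell,\, d_F - |F| - j\}. \]
Under the assumption $|F| \leq d_F - i - j - 2$, the quantity $d_F - |F| - j$ is at least $i + 2$; and since $i \leq \ell - 2$, also $\ell \geq i+2$. Thus the minimum is at least $i+2$, and we obtain $\depth R_{\mathfrak{p}_F} \geq i+2$.

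The last step is to translate this depth bound into vanishing of reduced homology of the link. By the characterization of depth through local cohomology, $H^k_{\mathfrak{n}}(\mathbb{K}[\lk_\Delta F]) = 0$ for all $k < i+2$, where $\mathfrak{n}$ is the homogeneous maximal ideal of $\mathbb{K}[\lk_\Delta F]$. In particular $H^{i+1}_{\mathfrak{n}}(\mathbb{K}[\lk_\Delta F]) = 0$. Hochster's formula, applied to the link viewed as a simplicial complex and restricted to the degree-zero strand, gives
\[ \dim_\mathbb{K} H^{i+1}_{\mathfrak{n}}(\mathbb{K}[\lk_\Delta F])_{\mathbf 0} = \dim_\mathbb{K} \tilde{H}_i(\lk_\Delta F;\mathbb{K}), \]
so we conclude $\tilde H_i(\lk_\Delta F;\mathbb{K})=0$, as required.

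The main obstacle I anticipate is bookkeeping at the boundary cases $i=-1$ and $F=\emptyset$, where one must handle the convention $\tilde H_{-1}(\cdot;\mathbb{K})$ (which is nonzero only on the void complex) together with the fact that $\lk_\Delta \emptyset = \Delta$. In the $i=-1$ case the hypothesis $|F| \leq d_F - j - 1$ ensures $F$ is a proper subface of some facet, so $\lk_\Delta F$ is nonempty, which is exactly what is needed for the degree-$(-1)$ Hochster formula to give $0$. Beyond this, the argument is a straightforward matching of indices between the $(S_\ell^j)$ inequality and the Hochster index shift, so I would expect no serious technical difficulty.
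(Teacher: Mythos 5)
Your proposal is correct and follows essentially the same route as the paper's proof: localize at the monomial prime corresponding to $F$ so that $R_{\mathfrak p_F}$ becomes (a localization of) $\mathbb{K}[\lk_\Delta F]$ with Krull dimension $d_F - |F|$, apply the $(S_\ell^j)$ hypothesis at that prime to get $\depth R_{\mathfrak p_F} \geq \min\{\ell,\, d_F - |F| - j\} \geq i+2$, and translate the resulting local-cohomology vanishing $H^{i+1}_{\mathfrak n}(\mathbb{K}[\lk_\Delta F]) = 0$ into $\tilde H_i(\lk_\Delta F;\mathbb{K}) = 0$ via Hochster's formula. The paper phrases the depth bound as $i+1 < b_{\mathfrak p}$ rather than $\depth \geq i+2$, and reads off the link homology from the multigraded piece of Hochster's formula with $F' = \emptyset$, but these are identical steps; your index bookkeeping ($d_F - |F| - j \geq i+2$ from $|F| \leq d_F - i - j - 2$, and $\ell \geq i+2$ from $i \leq \ell - 2$) matches the paper exactly.
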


\begin{proof}
Consider $\tilde{H}_{i}(\lk_{\Delta}(F);\mathbb{K})$ with $-1 \leq i \leq \ell-2$ and $|F| \leq d_F-i-j-2$.  Let us localize $R$ at a prime $\mathfrak{p}$ generated by the variables contained in $F$.  Let $\dim R_\mathfrak{p} = d_\mathfrak{p}$.  Then $(S_\ell^j)$ condition implies $H_{\mathfrak{p}R_\mathfrak{p}}^{\beta} (R_\mathfrak{p}) = 0$ for all $\beta < \min (\ell, d_\mathfrak{p}-j):=b_\mathfrak{p}$.  
Hochster's formula, then, implies $\tilde{H}_{\beta - |F'|-1}(\lk_{\lk F}(F');\mathbb{K}) = 0$ for all $F'$ and all $\beta < b_\mathfrak{p}$.  In particular, $\tilde{H}_{\beta -1}(\lk_{\lk F}(\emptyset);\mathbb{K}) = \tilde{H}_{\beta-1}(\lk_{\Delta}(F);\mathbb{K}) = 0$ for all $\beta < b_\mathfrak{p}$.  If we can show $i+1 < b_\mathfrak{p}$, then we will have $\tilde{H}_{i}(\lk_{\Delta}(F);\mathbb{K}) = 0$ as desired.  We have $i \leq \ell-2$ which implies $i+1 < \ell$.  We also have $|F| \leq d_F-j-i-2$, which implies that $i < d_F-j-1-|F|$.  Thus, $i+1 < d_F-|F|-j$, and $d_F-|F| = d_\mathfrak{p}$.  Thus, $i+1 < b_\mathfrak{p}$.
\end{proof}


\begin{theorem}
Let $\mathbb{K}$ be a field and $\Delta$ be a pure simplicial complex of dimension $d - 1$. Then $\Delta$ satisfies $(S_\ell^j)$ over
$\mathbb{K}$ if and only if for every $F \in \Delta$ (including $F = \emptyset$) with $|F| \leq d-i-j-2$ and for every $-1\leq i \leq \ell-2$, $\tilde{H}_i(\lk_{\Delta}(F);\mathbb{K}) = 0$ holds true.
\end{theorem}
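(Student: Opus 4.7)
The forward implication is essentially Lemma 6.3: when $\Delta$ is pure, every face $F$ is contained in a facet of size exactly $d$, so $d_F = d$, and the condition $|F| \leq d_F - i - j - 2$ becomes $|F| \leq d - i - j - 2$. Hence the asserted vanishing of $\tilde{H}_i(\lk_\Delta(F); \mathbb{K})$ follows directly. The substance of the theorem is the converse.

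For the converse, the plan is to translate $(S_\ell^j)$ into local cohomology at every localization and then apply Hochster's formula. Fix $F \in \Delta$ and let $\Delta' = \lk_\Delta F$ with Stanley--Reisner ring $R'$ and graded maximal ideal $\mathfrak{m}'$. By the remark in the introduction, $R_\mathfrak{p} \cong (R')_{\mathfrak{m}'}$ (up to a faithfully flat extension), and by purity $\dim R_\mathfrak{p} = d - |F|$. Thus $(S_\ell^j)$ at the prime $\mathfrak{p}$ is equivalent to
\[ H^\beta_{\mathfrak{m}'}(R') = 0 \quad \text{for all } \beta < \min\{\ell,\, d - |F| - j\}. \]
Hochster's formula expresses the $\mathbb{Z}^n$-graded piece of $H^\beta_{\mathfrak{m}'}(R')$ in terms of reduced simplicial homology: for every $G \in \Delta'$,
\[ \tilde{H}_{\beta - |G| - 1}\bigl(\lk_{\Delta'}(G); \mathbb{K}\bigr) \]
controls the contribution to $H^\beta_{\mathfrak{m}'}(R')$ at squarefree multidegrees supported on $G$. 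Since $G \in \lk_\Delta F$ forces $F \cap G = \emptyset$, we have $\lk_{\Delta'}(G) = \lk_\Delta(F \cup G)$ and $|F \cup G| = |F| + |G|$.

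It then remains to verify that for each such $G$ and each $\beta < \min\{\ell, d - |F| - j\}$, the face $H := F \cup G$ and the integer $i := \beta - |G| - 1$ satisfy the hypotheses of the theorem. From $\beta < \ell$ and $|G| \geq 0$ one gets $i \leq \ell - 2$, and from $\beta < d - |F| - j$ one gets $i < d - |H| - j - 1$, i.e.\ $|H| \leq d - i - j - 2$. If $\beta < |G|$ then $i < -1$ and the reduced homology vanishes trivially; otherwise $i \geq -1$ and we are in the range covered by the assumption, so $\tilde{H}_i(\lk_\Delta(H); \mathbb{K}) = 0$. Since $F$ was arbitrary, this forces $H^\beta_{\mathfrak{m}'}(R') = 0$ in the required range and establishes $(S_\ell^j)$.

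The only real obstacle is the indexing: one must confirm both $|H|$ and the homological index $i$ fall inside the range $-1 \leq i \leq \ell - 2$ and $|H| \leq d - i - j - 2$ simultaneously, and the fact that $|H| = |F| + |G|$ (which uses $G \in \lk_\Delta F$, so $F$ and $G$ are disjoint) is what makes the two cutoffs for $\beta$ translate cleanly into the hypothesis. Purity enters exactly once, to replace $d_F$ by $d$; without it one only obtains the weaker statement of Lemma 6.3.
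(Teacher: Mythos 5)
Your forward implication and the converse computation are essentially the paper's route: translate the depth inequality at the localization corresponding to a face $F$ into vanishing of $H^\beta_{\mathfrak{m}'}(\mathbb{K}[\lk_\Delta F])$, feed this through Hochster's formula, use disjointness of $F$ and $G$ to identify $\lk_{\lk_\Delta F}(G)$ with $\lk_\Delta(F\cup G)$, and check the bookkeeping on $|F\cup G|$ and $i=\beta-|G|-1$. The index-chasing is correct (including the $\beta<|G|$ case), and you correctly isolate the single use of purity, which is to force $\dim R_\mathfrak{p} = d-|F|$, i.e.\ $d_F=d$.

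There is one genuine gap, at the sentence ``Since $F$ was arbitrary, this $\ldots$ establishes $(S_\ell^j)$.'' Varying $F$ over $\Delta$ produces only the primes of $R$ generated by variables, whereas $(S_\ell^j)$ quantifies over all of $\Spec R$. You have not said why it suffices to verify the depth inequality at monomial primes. The paper fills this exact gap by invoking Yanagawa's theorem that each $\Ext_S^{n-i}(R,S)$ is a squarefree module, so that its support (and hence, via the $\Ext$-dimension characterization of $(S_\ell^j)$ in Corollary 2.2 for equidimensional rings) is determined by primes generated by variables. Without that reduction, or some equivalent argument that the associated primes of the relevant $\Ext$ modules are monomial, the conclusion does not follow from what you have verified. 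Aside from this missing citation/step, the argument matches the paper's.
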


\begin{proof}
Suppose $\Delta$ satisfies $(S_\ell^j)$.  Then the result follows from lemma $6.3$.

Suppose for every $F \in \Delta$ with $|F| \leq d-i-j-2$ and for every $-1\leq i \leq \ell-2$, $\tilde{H}_i(\lk_{\Delta}(F);\mathbb{K}) = 0$.  

Then let us consider $\tilde{H}_{\alpha - |F| -1}(\lk_{\Delta}(F);\mathbb{K})$.  Let us examine the case where $\alpha < b:=\min \{\ell, d-j\}$.  We have $|F| > d-i-j-2=d-\alpha+|F|+1-j-2=d-\alpha-1+|F|-j$ if and only if $\alpha+1>d-j$ which implies $\alpha \geq d-j \geq b$.  Thus, $\tilde{H}_{\alpha - |F| - 1}(\lk_{\Delta}(F);\mathbb{K}) = 0$ for any $F$ so long as $\alpha < b$.  We also note that $\alpha < b$ implies $\alpha <\ell$ which implies $\alpha -|F|-1 \leq \ell-2$.  Thus, if we have $\alpha < b$, we have $\tilde{H}_{\alpha - |F| -1}(\lk_{\Delta}(F);\mathbb{K}) = 0$ for all $F$.  Applying Hochster's formula, we get that $H_\mathfrak{m}^i(R)=0$ for all $i < b$.  Therefore, we have $\depth R \geq b$.

We must prove that for any localization $R_\mathfrak{p}$ with dimension $d_\mathfrak{p}$, we have $\depth R_\mathfrak{p} \geq b_\mathfrak{p}:= \min \{ \ell,d_\mathfrak{p}-j \}$.  From~\cite{Ya00}, we have that $R$ is a Stanley-Reisner ring implies that $\Ext _S^{n-i}(R,S)$ is a square-free module.  Thus, $\Ext _S^{n-i}(R,S)$ is uniquely determined by its primes generated by variables.  Support of $\Ext _S^{n-i}(R,S)$ determines if $R$ satisfies $(S_\ell^j)$.  Therefore, we only need consider primes generated by variables when showing $R$ satisfies $(S_\ell^j)$.  To examine $R_\mathfrak{p}$ we consider the simplicial complex $\lk_\Delta F$, where $F$ is generated by the variables which are not generators of $R$ (see Remark $1.4$).  

We then have $\tilde{H}_{i}(\lk_{\lk F}(F');\mathbb{K}) = \tilde{H}_{i}(\lk_{\Delta}(F \cup F');\mathbb{K})$ which is $0$ when $|F \cup F'| \leq d-i-j-2$ and when $i \leq \ell-2$.  Thus, we have $\tilde{H}_{i}(\lk_{\lk F}(F');\mathbb{K}) = 0$ when $|F|+|F'| \leq d-i-j-2$ and $i \leq \ell-2$, and thus $\tilde{H}_{i}(\lk_{\lk F}(F');\mathbb{K}) =0$ when $|F'| \leq d-|F|-i-j-2$.  We have $d-|F|\geq d_\mathfrak{p}$.  Thus, $\tilde{H}_{i}(\lk_{\lk F}(F');\mathbb{K}) =0$ when $|F'| \leq d_\mathfrak{p}-i-j-2$ and $i \leq \ell-2$.  Thus, by the above argument, we have $\depth R_\mathfrak{p} \geq \min \{ \ell, d_\mathfrak{p}-j \}$.  Thus, we have $R$ satisfies $(S_\ell^j)$.  
\end{proof}

\begin{cor}
Let $\mathbb{K}$ be a field and $\Delta$ be a pure simplicial complex. Then the Stanley-Reisner ring of $\Delta$, $R$, satisfies $(S_2^j)$ over $\mathbb{K}$ if and only if for every face $F \in \Delta$ with $\dim (\lk _{\Delta} (F)) \geq 1+j$, $\lk _{\Delta} (F)$ is connected. Note $(S_2^j)$ is independent of the base field.
\end{cor}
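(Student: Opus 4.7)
The approach is to apply Theorem 6.4 directly with $\ell = 2$. That theorem then asserts that $R$ satisfies $(S_2^j)$ if and only if for every $F \in \Delta$ and every $i \in \{-1, 0\}$ with $|F| \leq d - i - j - 2$, one has $\tilde{H}_i(\lk_\Delta F; \mathbb{K}) = 0$. So the entire proof consists of translating these two reduced-homology vanishings into the single combinatorial statement in the corollary.

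For $i = 0$, the relevant faces are exactly those with $|F| \leq d - j - 2$, and $\tilde{H}_0(\lk_\Delta F; \mathbb{K}) = 0$ is precisely connectedness of $\lk_\Delta F$. Since $\Delta$ is pure of dimension $d - 1$, each $\lk_\Delta F$ is itself pure of dimension $d - 1 - |F|$, so the bound $|F| \leq d - j - 2$ is equivalent to $\dim \lk_\Delta F \geq j + 1$. This matches the hypothesis stated in the corollary exactly.

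For $i = -1$, the bound reads $|F| \leq d - j - 1$, and $\tilde{H}_{-1}(\lk_\Delta F; \mathbb{K}) = 0$ amounts to saying that $\lk_\Delta F$ contains at least one vertex, equivalently that $F$ is not a facet of $\Delta$. Under the cardinality bound $|F| \leq d - j - 1 < d$ (using $j \geq 0$) and the purity of $\Delta$, $F$ strictly refines some facet of size $d$, so $\lk_\Delta F$ automatically contains the remaining vertices of that facet. Hence the $i = -1$ case is vacuous and contributes nothing to the criterion, which is why the corollary only mentions links of dimension at least $j+1$.

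The only bookkeeping subtlety, and the nearest thing to an obstacle, is correctly matching $|F| \leq d - i - j - 2$ with $\dim \lk_\Delta F \geq i + j + 1$ via purity, and recognizing that the $i=-1$ case imposes no constraint. Independence from the base field $\mathbb{K}$ then comes for free: the extracted criterion is purely combinatorial, making no reference to coefficients, so whether $R$ satisfies $(S_2^j)$ does not depend on $\mathbb{K}$. No significant obstacle is expected; the corollary is essentially a direct unpacking of Theorem 6.4 at $\ell = 2$.
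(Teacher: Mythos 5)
Your proof is correct, and it takes a cleaner route than the paper's. The paper proves Corollary~6.5 from scratch: it localizes at the prime $\tilde{\mathfrak{p}}$ generated by the variables off $F$, invokes Hochster's formula to translate $H^1_{\mathfrak{m}}(R_{\tilde{\mathfrak{p}}})=0$ into $\tilde{H}_0(\lk_\Delta F)=0$, and in the converse direction checks the $\tilde{H}_{-1}$ vanishing for singleton faces $F'$ of $\lk F$ and handles the low-dimensional primes via $(S_1)$. In effect, it re-runs the $\ell=2$ case of the argument that already proved Theorem~6.4. You instead treat Theorem~6.4 as a black box at $\ell=2$ and translate the two values $i=0,-1$: the $i=0$ range $|F|\le d-j-2$ matches $\dim\lk_\Delta F\ge j+1$ by purity, and $\tilde{H}_0=0$ is connectedness; the $i=-1$ constraint is vacuous because any $F$ with $|F|\le d-j-1<d$ is a proper subset of a facet, so $\lk_\Delta F$ has a vertex and $\tilde{H}_{-1}=0$ automatically (this uses $j\ge 0$, which is implicit throughout the paper). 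Your observation that field-independence follows immediately because the resulting criterion is purely combinatorial is also the right way to see it. The only thing worth double-checking is the one you flagged yourself: the purity of $\Delta$ is genuinely needed both to equate $|F|\le d-j-2$ with $\dim\lk_\Delta F\ge j+1$ and to discharge the $i=-1$ case, and you use it correctly in both places.
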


\begin{proof}
Suppose $\Delta$ satisfies $(S_2^j)$.  Then for any prime ideal $\mathfrak{p}$ with $\dim R_\mathfrak{p} \geq 2+j$ we have $\depth R_\mathfrak{p} \geq \min \{ 2, \dim R_\mathfrak{p}-j \} = 2$.  Now we take any face $F$ of our complex such that $\dim \lk_{\Delta} F \geq 1+j$.  The Stanley-Reisner ring of $\lk_{\Delta} F$ is $R_{\tilde{\mathfrak{p}}}$ where $\tilde{\mathfrak{p}}$ is the prime ideal generated by the variables not contained in $F$.  Thus, $\dim R_{\tilde{\mathfrak{p}}} \geq 2+j$.  Thus, $H_m^1(R_{\tilde{\mathfrak{p}}})=0$.  Thus, by Hochster's formula, we have $\tilde{H}_{1-|F'|-1}(\lk_{\lk F} F') = 0$ for all $F'$.  In particular, $\tilde{H}_{0}(\lk_{\lk F} \emptyset) = 0$, which implies $\lk_{\lk F} \emptyset = \lk_{\Delta}F$ is connected.  

Now suppose $\dim \lk F \geq 1+j$ implies $\lk F$ is connected.  Thus, we have $\tilde{H}_0(\lk_{\lk F} \emptyset) = 0$.  The link of $F$ has dimension at least $1$.  Thus, if we take a face of $\lk F$ $F'$ such that $|F'|=1$, then $\lk _{\lk F} F'$ is not empty.  Thus, $\tilde {H}_{-1}(\lk_{\lk F} F') = 0$ for all $|F'| = 1$.  Thus, we get that $H_\mathfrak{m}^1(R_\mathfrak{p}) = 0$ where $\mathfrak{p}$ is the prime generated by the variables contained in $F$.  Thus, we have $\depth R_\mathfrak{p} \geq 2$.  This argument holds for all primes generated by variables such that $\dim R_\mathfrak{p} \geq 2+j$.  If we have a prime $\mathfrak{p}$ such that $\dim R_\mathfrak{p} < 2+j$, we still have $\depth R_\mathfrak{p} \geq 1$, since $R_\mathfrak{p}$ is a Stanley-Reisner ring.  Therefore, $R$ satisfies $(S_2^j)$.  
\end{proof}

\section{Monomial Ideals}

In this section, we consider monomial ideals that are not necessarily squarefree.  For convenience, we will establish a few conventions.  When we consider the depth of a Stanley-Reisner ring, we shall be considering its depth with respect to the unique homogeneous maximal ideal.  When we speak of the localized Stanley-Reisner ring of a link, we shall mean the Stanley-Reisner ring of the link localized at its unique homogeneous maximal ideal.  Throughout this section, let $\mathbb{K}$ be a field.

One of the most powerful techniques for working with such ideals is polarization.  We reproduce the definition of \cite{MT09}.

\begin{defn}
Let $I$ be a monomial ideal in $\mathbb{K}[x_1,...,x_n]$. Let the minimal generators of $I$ be $u_1,...,u_m$ where $u_i = \prod_{j=1}^n x_j^{a_{ij}}$. For $1 \leq j \leq n$, let $a_j = \max \{ a_{ij} \}$, and let $N=\max \{ a_j \}$. Then let \[ T = \mathbb{K}[x_{1,1}x_{1,2},...,x_{1,N},x_{2,1},x_{2,2},...,x_{2,N},...,x_{n,1},x_{n,2},...,x_{n,N}]. \]

\end{defn}

\begin{defn}
The polarization of a monomial $u = x_1^{a_1}x_2^{a_2}...x_n^{a_n}$ is 

\[ \pol(u) = \prod_{1 \leq k \leq n, a_k \neq 0} (x_{k,1}x_{k,2}...x_{k,a_k})\]

\end{defn}

Let $J$ be the squarefree monomial ideal of $T$ generated by $\{ \pol(u_i) \}$.  We call $J$ the \textit{polarization} of $I$. 

In \cite{HT05}, Herzog, Takayama, and Terai proved that for a monomial ideal $I$, $S/I$ being Cohen-Macaulay implies $S/\sqrt{I}$ is Cohen-Macaulay.  In \cite{PF14}, Pournaki et al. generalize this result to say $S/I$ satisfying $(S_\ell)$ implies $S/\sqrt{I}$ satisfies $(S_\ell)$.  We generalize this result further to hold for $(S_\ell^j)$.

\begin{lemma}
Localization preserves $(S_\ell^j)$.
\end{lemma}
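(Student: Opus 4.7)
The plan is to unpack the definition of $(S_\ell^j)$ directly and use the standard correspondence between primes of a localization and primes of the original ring. Specifically, if $W \subset R$ is a multiplicatively closed subset and $R$ satisfies $(S_\ell^j)$, I will show that for every $\mathfrak{q} \in \Spec(W^{-1}R)$ the required depth/dimension inequality holds in the localization.

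First, I would recall that primes of $W^{-1}R$ are in bijection with primes $\mathfrak{p} \in \Spec R$ disjoint from $W$, via $\mathfrak{q} = W^{-1}\mathfrak{p}$. The key ingredient is the canonical isomorphism $(W^{-1}R)_{\mathfrak{q}} \cong R_\mathfrak{p}$, which is a basic property of iterated localization. From this isomorphism both the Krull dimension and the depth at $\mathfrak{q}$ agree with those at $\mathfrak{p}$:
\[
\dim (W^{-1}R)_{\mathfrak{q}} = \dim R_\mathfrak{p}, \qquad \depth (W^{-1}R)_{\mathfrak{q}} = \depth R_\mathfrak{p}.
\]

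Next, since $R$ satisfies $(S_\ell^j)$, by definition $\depth R_\mathfrak{p} \geq \min\{\ell, \dim R_\mathfrak{p} - j\}$. Substituting via the equalities above yields
\[
\depth (W^{-1}R)_{\mathfrak{q}} \geq \min\{\ell, \dim (W^{-1}R)_{\mathfrak{q}} - j\},
\]
which is exactly the $(S_\ell^j)$ condition at $\mathfrak{q}$. Since $\mathfrak{q}$ was arbitrary, $W^{-1}R$ satisfies $(S_\ell^j)$.

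There is essentially no obstacle here; the statement is a formal consequence of the definition together with the preservation of depth and dimension under further localization. The only care one needs to take is to note that the condition is vacuous or automatic in the degenerate cases (e.g.\ when $\dim R_\mathfrak{p} - j \leq 0$ the minimum becomes $\dim R_\mathfrak{p} - j$ and any nonnegative depth suffices), but these are handled uniformly by the $\min$ in the definition.
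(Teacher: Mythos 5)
Your proof is correct and follows essentially the same route as the paper's: identify primes of $W^{-1}R$ with primes of $R$ disjoint from $W$, invoke the canonical isomorphism $(W^{-1}R)_{\mathfrak{q}}\cong R_{\mathfrak{p}}$ to transfer depth and dimension, and then apply the $(S_\ell^j)$ inequality at $\mathfrak{p}$. The paper's argument is identical, just stated slightly more tersely.
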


\begin{proof}
Let $R$ satisfy $(S_\ell^j)$.  Let $S$ be a multiplicatively closed set in $R$.  Let $\mathfrak{q}$ be a prime ideal of $R_S$.  The ideal $\mathfrak{q}$ is the extension of a prime ideal $\mathfrak{p}$ in $R$ and so $(R_S)_\mathfrak{q} \cong R_\mathfrak{p}$.  Since $R$ satisfies $(S_\ell^j)$, \[\depth (R_S)_\mathfrak{q} = \depth R_\mathfrak{p} \geq \min\{ \ell, \dim R_\mathfrak{p} -j \} =\min \{ \ell, \dim (R_S)_\mathfrak{q} -j \}.\]  Therefore, $R_S$ satisfies $(S_\ell^j)$.
\end{proof}

In \cite{MT09}, Murai and Terai prove that polarization preserves $(S_\ell)$.  We will generalize their argument in the following lemmas.

\begin{lemma}
If $\depth R \geq \dim R - j$ then $\depth R_\mathfrak{p} \geq \dim R_\mathfrak{p} - j$ for all $\mathfrak{p} \in \Spec R$ where $\mathfrak{p}$ is generated by variables.
\end{lemma}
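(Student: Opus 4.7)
My plan is to combine two classical inequalities from commutative algebra with the hypothesis.

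The first ingredient is the depth--localization inequality: for a Noetherian (local, or graded with graded prime) ring $R$ and any $\mathfrak{p} \in \Spec R$, one has
\[
\depth R_\mathfrak{p} + \dim R/\mathfrak{p} \;\geq\; \depth R.
\]
This is standard; it follows, for instance, from Ischebeck's lemma ($\Ext^i_R(R/\mathfrak{p},R) = 0$ for $i < \depth R - \dim R/\mathfrak{p}$) together with $\depth R_\mathfrak{p} \geq \operatorname{grade}(\mathfrak{p},R)$. The second ingredient is the universal chain inequality $\dim R \geq \dim R_\mathfrak{p} + \dim R/\mathfrak{p}$, which holds in any Noetherian ring.

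Chaining these with the hypothesis $\depth R \geq \dim R - j$ yields
\[
\depth R_\mathfrak{p} \;\geq\; \depth R - \dim R/\mathfrak{p} \;\geq\; (\dim R - j) - \dim R/\mathfrak{p} \;\geq\; \dim R_\mathfrak{p} - j,
\]
which is the desired conclusion. Notably this argument works for every $\mathfrak{p} \in \Spec R$, so the variable-generated restriction in the statement appears to be a convenience for the subsequent application rather than an essential constraint.

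The main obstacle is essentially none, as both ingredients are classical. For a more combinatorial proof in the spirit of the surrounding sections, one could alternatively pass to the polarization $\tilde R = T/J$, a Stanley-Reisner ring satisfying $\depth \tilde R - \dim \tilde R = \depth R - \dim R$ by the usual polarization identities. Hochster's formula then rephrases the hypothesis as $\tilde H_i(\lk_\Delta G; \mathbb{K}) = 0$ for all faces $G \in \Delta$ and all $i \leq \dim \mathbb{K}[\Delta] - j - |G| - 2$, and the corresponding vanishing that characterizes the analogous conclusion for $\mathbb{K}[\lk_\Delta F]$ runs only over $G' \supseteq F$ with a no-stricter bound (using $\dim \mathbb{K}[\lk_\Delta F] \leq \dim \mathbb{K}[\Delta]$), hence is immediate.
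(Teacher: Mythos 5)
Your chain of inequalities is correct and gives a valid proof of the lemma. The two ingredients are exactly what is needed: $\depth R_\mathfrak{p} \geq \operatorname{grade}(\mathfrak{p},R) \geq \depth R - \dim R/\mathfrak{p}$ (Ischebeck), and $\dim R \geq \dim R_\mathfrak{p} + \dim R/\mathfrak{p}$, and combining them with the hypothesis $\depth R \geq \dim R - j$ yields $\depth R_\mathfrak{p} \geq \dim R_\mathfrak{p} - j$ directly.

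This is, however, a genuinely different route from the paper's. The paper stays inside the Stanley--Reisner framework: it invokes the Murai--Terai inequality $\depth \mathbb{K}[\lk_\Delta(v)] \geq \depth \mathbb{K}[\Delta] - 1$, iterates it over the vertices of the face $F$ complementary to $\mathfrak{p}$ to obtain $\depth R_\mathfrak{p} \geq \depth R - |F|$, and then uses $\dim R_\mathfrak{p} \leq \dim R - |F|$ to conclude. For a face prime $\mathfrak{p}_F$ one has $\dim R/\mathfrak{p}_F = |F|$, so the paper's inequalities are a term-by-term combinatorial shadow of yours; Murai--Terai's link inequality is essentially the special case of Ischebeck applied one vertex at a time. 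What your route buys is generality and economy: the statement holds for any Noetherian local (or graded) ring and any prime $\mathfrak{p}$, with no simplicial machinery and no need to restrict to variable-generated primes --- that restriction, as you observe, is merely a convenience for the application in Lemma 7.5. What the paper's route buys is that it keeps the argument self-contained within the toolbox already in use in Section 7.

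One small caveat: your closing ``more combinatorial'' sketch via polarization is misguided. In the application inside the paper, the lemma is invoked for $R = B/J$ with $J$ already a square-free monomial ideal, i.e., $R$ is already a Stanley--Reisner ring, so polarization is the identity and does not give an alternative path. Drop that sketch; your main argument stands on its own.
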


\begin{proof}
The ring $R_\mathfrak{p}$ where $\mathfrak{p}$ is a prime generated by variables is the $0$ ring, a field, or the localized Stanley-Reisner ring of the link of F, which is generated by the variables not in the generating set of $\mathfrak{p}$.  From \cite{MT09}, we have $\depth(\mathbb{K}[\lk_\Delta(v)]) \geq \depth(\mathbb{K}[\Delta])-1$.  Therefore, $\depth R_\mathfrak{p} \geq \depth R - |F| \geq \dim R - j -|F| \geq \dim R_\mathfrak{p} -j$.
\end{proof}

\begin{lemma}
Polarization preserves $(S_\ell^j)$.
\end{lemma}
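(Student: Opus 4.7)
The plan is to reduce to a single elementary polarization step and transfer $(S_\ell^j)$ via a nonzerodivisor argument. Observe that the full polarization $J$ can be built one new variable at a time: at each step, one replaces a single factor $x_i^2$ within one minimal generator of the current ideal by $x_iy$, where $y$ is a fresh variable. Thus, by induction on $\sum_i (a_i - 1)$, it suffices to prove the elementary step: if $T = S[y]$ and $J \subset T$ is the result of a single such polarization, then $S/I$ satisfies $(S_\ell^j)$ if and only if $T/J$ does.

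For the elementary step, the central tool is that $y - x_i$ is a nonzerodivisor on $T/J$ with $(T/J)/(y - x_i) \cong S/I$. In particular $\dim(T/J) = \dim(S/I) + 1$ and $\depth(T/J) = \depth(S/I) + 1$, which gives the $(S_\ell^j)$ inequality at the maximal ideal of $T$. Since $T/J$ is Stanley--Reisner, the squarefree-module reduction used in Section 4 lets us test $(S_\ell^j)$ only at variable-generated primes $\mathfrak{q} \subset T$. Set $\mathfrak{p} \subset S$ to be the prime obtained from $\mathfrak{q}$ by discarding $y$, and split into three cases. \textbf{(a)} If $\{x_i, y\} \subset \mathfrak{q}$, then $y - x_i$ remains a nonzerodivisor after localizing, and $(T/J)_\mathfrak{q}/(y - x_i) \cong (S/I)_\mathfrak{p}$, so the inequality transfers directly with dimension and depth each dropping by one. \textbf{(b)} If $y \in \mathfrak{q}$ but $x_i \notin \mathfrak{q}$, compute via the link: in the Stanley--Reisner complex of $J$, the vertices $y$ and $x_i$ play symmetric roles after inverting $x_i$, so $(T/J)_\mathfrak{q}$ is isomorphic to a localization of $(S/I)_{\mathfrak{p}+(x_i)}$. \textbf{(c)} If $y \notin \mathfrak{q}$, inverting $y$ makes $(T/J)_\mathfrak{q}$ itself a localization of $S/I$. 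In each case, the required bound $\depth \geq \min\{\ell, \dim - j\}$ follows from the corresponding bound on the appropriate localization of $S/I$, using Lemma 7.4 (localization preserves $(S_\ell^j)$) and Lemma 7.5 (which propagates $\depth \geq \dim - j$ from the maximal ideal to every variable-generated prime).

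The main obstacle is Case (b): without the luxury of a regular element $y - x_i$ inside $\mathfrak{q}$, one must exploit the specific combinatorial structure of polarization, comparing the link of $y$ in the polarized complex with the link of $x_i$ in the original. This is where the Murai--Terai argument is most delicate, and precisely where Lemma 7.5 does the heavy lifting, together with the vertex-link depth bound $\depth \mathbb{K}[\lk_\Delta(v)] \geq \depth \mathbb{K}[\Delta] - 1$ from \cite{MT09}. Once this case is handled, both directions of the equivalence follow, and the general statement is immediate by the inductive reduction.
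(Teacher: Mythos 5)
Your inductive reduction to a single elementary polarization step is a genuinely different decomposition from the paper's, which handles the full polarization in one shot by splitting the face $F = F_1 \cup \cdots \cup F_s \cup V_{s+1} \cup \cdots \cup V_n$, passing to the colon ideal $I' = (I : x_{s+1}^N, \ldots, x_n^N) \cap S'$, and invoking the depth additivity $\depth(B/J) = \depth(S'/I') + s(N-1)$ from \cite{BH98}. Your single-step strategy is attractive in principle because of the clean regular element $y - x_i$, but as written it has genuine gaps.

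First, after one elementary polarization step the ideal $J$ is generally \emph{not} squarefree unless $I$ already was, so $T/J$ is not a Stanley--Reisner ring and the Yanagawa squarefree-module reduction you invoke to restrict attention to variable-generated primes does not apply to these intermediate quotients. For a general $\mathbb{Z}^n$-graded ring some reduction of this kind may still be available, but it requires a separate argument comparing $\height \mathfrak{p}$ with the height of its monomial part, which you have not supplied.

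Second, and more seriously, Case (b) is where the lemma actually lives and it is not proved. The stated identification ``$(T/J)_\mathfrak{q}$ is isomorphic to a localization of $(S/I)_{\mathfrak{p}+(x_i)}$'' is false in general: take $S = \mathbb{K}[x]$, $I = (x^2)$, so $T = \mathbb{K}[x,y]$, $J = (xy)$, and $\mathfrak{q} = (y)$. Then $(T/J)_{\mathfrak{q}} \cong \mathbb{K}(x)$, while $(S/I)_{(x)} = \mathbb{K}[x]/(x^2)$, whose only localizations are itself and $0$; neither is a field with a transcendental. You explicitly flag Case (b) as ``where the Murai--Terai argument is most delicate'' and say it is ``precisely where Lemma 7.5 does the heavy lifting'' --- but Lemma 7.5 is the statement you are proving, so this is circular (presumably you mean the localization and depth-propagation lemmas, which are 7.3 and 7.4 in the paper's numbering, but neither of those by itself closes Case (b)). Without an actual combinatorial comparison of $\lk_{\Delta_J}$ at faces avoiding $x_i$ with links in the original complex, the argument remains a plan rather than a proof; the paper sidesteps this entirely by never isolating a single new variable.
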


\begin{proof}

We mimic the proof of \cite{MT09}.

Let us consider an $(S_\ell^j)$ ring $\mathbb{K}[x_1,...,x_n]/I$.  Let $I^{pol} \subseteq T$ be the polarization of $I$.  Let $\Delta$ be the complex whose Stanley-Reisner ring is $T/I^{pol}$.  Let $F$ be an arbitrary face of $\Delta$.  We will be finished when we prove $\depth (\mathbb{K}[ \lk_{\Delta}(F)]) \geq \min \{ r, \dim \lk_{\Delta} (F) +1-j \}$.

Let us write $V_k = \{ x_{k,1}, x_{k,2},...,x_{k,N} \}$.  We may write $F = F_1 \cup F_2 \cup \cdots \cup F_s \cup V_{s+1} \cup \cdots \cup V_n$, where $F_k$ is a proper subset of $V_k$ for $1 \leq k \leq s$.

Set $S' = \mathbb{K}[x_1,...,x_s]$, $I' = (I:x_{s+1}^N,...,x_n^N) \cap S'$.  Let $\mathfrak{p}= \langle x_1,...,x_s \rangle$.  Then:

\[ \depth(S'/I') = \depth(S/I)_\mathfrak{p} \geq \min \{r, \dim (S/I)_\mathfrak{p}-j \} = \min \{r, \dim (S'/I')-j \} \]

Let $B = \mathbb{K}[\cup_{k=1}^{s} V_k]$.  Let $J \subseteq B$ be the monomial ideal generated by the polarization of the minimal generators of $I'$.  Let $\Gamma$ be the set of all squarefree monomials in $B$ which are not in $J$.  We note $\Gamma$ is a simplicial complex if we identify squarefree monomials with sets of variables.  Since $J$ is a squarefree monomial ideal, $J$ is generated by the monomials that generate $I_{\Gamma}$ and the variables of $B$ which are not in $\Gamma$.  Therefore, we have $\depth (\mathbb{K}[\Gamma]) = \depth(B/J)$.  Because $J$ is  the polarization of $I'$ we get (from \cite{BH98}):

\[ \depth ( \mathbb{K}[\Gamma]) = \depth(B/J)=\depth(S'/I') +s(N-1). \]

By construction of $\Delta$ and $\Gamma$, we get $\lk_{\Delta}(V_{s+1} \cup ... \cup V_n) = \Gamma$.  Thus, $\lk_\Delta(F)=\lk_\Gamma(F_1 \cup .... \cup F_s)$.

Thus, it will be sufficient to show \[ \depth (\mathbb{K}[\lk_\Gamma(F_1 \cup \cdots \cup F_s)]) \geq \min \{ r, \dim \lk_\Gamma(F_1 \cup \cdots \cup F_s) +1-j \}. \]

Suppose $\depth (S'/I') \geq \dim (S'/I') - j$.  


From the process of polarization we can see that \[ \dim(B/J) = \dim(S'/I') + s(N-1). \]

Thus, we have $\depth B/J \geq \dim B/J - j.$  By lemma $7.4$, we have \[ \depth (\mathbb{K}[\lk_\Gamma(F_1 \cup \cdots \cup F_s)]) \geq \dim \lk_\Gamma(F_1 \cup \cdots \cup F_s) +1-j. \]

Now suppose $\depth (S'/I') \geq \ell$.  From \cite{MT09}, we have \[ \depth (\mathbb{K}[\lk_\Gamma(F_1 \cup \cdots \cup F_s)]) \geq \ell. \]

\end{proof}

\begin{theorem}
Let $A=\mathbb{K}[x_1, \cdots , x_n]$ be the polynomial ring in $n$ variables over a field $\mathbb{K}$.  Let $I$ be a monomial ideal of $A$.  Suppose that $A/I$ satisfies $(S_{\ell}^j)$. Then $A/\sqrt(I)$ satisfies $(S_{\ell}^j)$.
\end{theorem}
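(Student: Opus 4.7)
The plan is to combine the polarization lemma (Lemma 7.5) with the localization lemma (Lemma 7.3), via a combinatorial identification that realizes $\Delta_{\sqrt{I}}$ as a link inside the polarized complex. Let $J = I^{\pol} \subseteq T$ be the polarization of $I$, and let $\Delta^{\pol}$ denote the simplicial complex whose Stanley-Reisner ring is $T/J$. Since polarization preserves $(S_\ell^j)$ by Lemma 7.5, the ring $T/J$ satisfies $(S_\ell^j)$, and the task reduces to transferring this property to $A/\sqrt{I}$.

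The key combinatorial step is the following identification. Set $F^{*} = \{x_{k,i} : 1 \leq k \leq n,\ 2 \leq i \leq N\}$, the set of all auxiliary copies of the variables of $T$. Every minimal generator of $J$ has the form $\pol(u) = \prod_{a_k > 0} x_{k,1} x_{k,2} \cdots x_{k,a_k}$, and in particular contains at least one first-copy variable $x_{k,1}$, which lies outside $F^{*}$. Thus $F^{*}$ is a face of $\Delta^{\pol}$. Identifying $x_{k,1}$ with $x_k$, a set $G$ of first-copy variables has $G \cup F^{*}$ a face of $\Delta^{\pol}$ if and only if no generator of $I$ has support contained in $\{k : x_{k,1} \in G\}$; this is precisely the condition that $\{x_k : x_{k,1} \in G\}$ is a face of $\Delta_{\sqrt{I}}$. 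Therefore $\lk_{\Delta^{\pol}}(F^{*}) \cong \Delta_{\sqrt{I}}$.

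To conclude, I use the reduction (established through the square-free $\Ext$ argument appearing in the proof of Theorem 4.5) that for a Stanley-Reisner ring the property $(S_\ell^j)$ need only be checked at primes generated by variables. For such a prime $\mathfrak{q}$ of $A/\sqrt{I}$ corresponding to a face $F$ of $\Delta_{\sqrt{I}}$, Remark 1.4 identifies $(A/\sqrt{I})_{\mathfrak{q}}$ with the localization of the Stanley-Reisner ring of $\lk_{\Delta_{\sqrt{I}}}(F)$ at its homogeneous maximal ideal. By the identification above, this link equals $\lk_{\Delta^{\pol}}(F^{*} \cup \widetilde{F})$, where $\widetilde{F} = \{x_{k,1} : x_k \in F\}$. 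Hence $(A/\sqrt{I})_{\mathfrak{q}} \cong (T/J)_{\widetilde{\mathfrak{q}}}$, where $\widetilde{\mathfrak{q}}$ is the prime of $T$ generated by the variables not lying in $F^{*} \cup \widetilde{F}$. Since $T/J$ satisfies $(S_\ell^j)$, Lemma 7.3 implies $(T/J)_{\widetilde{\mathfrak{q}}}$ does as well, and $A/\sqrt{I}$ therefore satisfies $(S_\ell^j)$.

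I expect the main obstacle to be the careful verification of the link identification $\lk_{\Delta^{\pol}}(F^{*}) \cong \Delta_{\sqrt{I}}$ and of the consequent isomorphism of local rings after further localization; once these are nailed down, the conclusion is a direct assembly of the two supporting lemmas with the Stanley-Reisner reduction to variable primes.
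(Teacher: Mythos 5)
Your proof is correct, and it takes a genuinely different route from the paper's. Both proofs begin the same way, invoking Lemma 7.5 to conclude that the polarized ring $T/J$ satisfies $(S_\ell^j)$. From that point the paper inverts the polarization algebraically: it localizes $T/J$ at the multiplicative set $\mathbb{K}[Y]\setminus\{0\}$ to obtain $(\mathbb{K}[x_1,\ldots,x_n]/\sqrt{I})\otimes_{\mathbb{K}}\mathbb{K}(Y)$, which satisfies $(S_\ell^j)$ by Lemma 7.3, and then descends the property along the faithfully flat base change $\mathbb{K}\hookrightarrow\mathbb{K}(Y)$ by a fiber-dimension computation (using \cite{BH98}, Theorems 1.2.16 and A.11 and the Cohen-Macaulayness of $\mathbb{K}(\mathfrak{p})\otimes\mathbb{K}(Y)$). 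Your proof instead inverts the polarization combinatorially: you exhibit the set $F^*$ of auxiliary copies as a face of the polarized complex and verify that $\lk_{\Delta^{\pol}}(F^*)\cong\Delta_{\sqrt{I}}$ under the dictionary $x_{k,1}\leftrightarrow x_k$, so that each localization $(A/\sqrt{I})_{\mathfrak{q}}$ at a monomial prime is literally one of the link-localizations of $T/J$, for which the bound $\depth\ge\min\{\ell,\dim-j\}$ is available immediately from $T/J$ satisfying $(S_\ell^j)$. Your route avoids the field-extension descent entirely and is more explicitly combinatorial; its one external ingredient is the reduction to monomial primes via the square-freeness of the $\Ext$ modules, which the paper also uses (in Theorems 4.5 and 6.4), so nothing new is assumed. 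Both proofs are comparable in length; yours arguably makes the mechanism of polarization more transparent, while the paper's follows the structure of Pournaki et al.\ more closely and would adapt more readily to non-combinatorial settings.

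One small remark: the final invocation of Lemma 7.3 is slightly more than you need. Once you have identified $(A/\sqrt{I})_{\mathfrak{q}}\cong (T/J)_{\widetilde{\mathfrak{q}}}$, the required inequality $\depth (T/J)_{\widetilde{\mathfrak{q}}}\ge\min\{\ell,\dim (T/J)_{\widetilde{\mathfrak{q}}}-j\}$ is exactly the content of the definition of $(S_\ell^j)$ for $T/J$ at the prime $\widetilde{\mathfrak{q}}$; no further localization lemma is needed. This does not affect correctness.
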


\begin{proof}
 Using the lemmas and theorem in this section, we follow the argument of Theorem 8.3 of Pournaki et al \cite{PF14}.
 
Let $T/J$ be the polarization of $A/I$, where $T=\mathbb{K}[x_1,...,x_n,Y]$ is the new polynomial ring over $\mathbb{K}$ with Y as the set of new variables.  By Theorem $7.5$, $T/J$ satisfies $(S_{\ell}^j)$.  Let $W$ be the multiplicatively closed subset $\mathbb{K}[Y] \backslash 0$ in $T$ and consider $F=\mathbb{K}(Y)$.  Then the localization of $T/J$ at W is isomorphic to:
 
 \[ F[x_1,...,x_n]/\sqrt(I) \cong (\mathbb{K}[x_1,...,x_n]/\sqrt(I)) \otimes_\mathbb{K} F. \]
 
Since localization preserves generalized Serre's condition, $(\mathbb{K}[x_1,...,x_n]/\sqrt(I)) \otimes_\mathbb{K} F$ satisfies $(S_\ell^j)$.  Now to finish the proof, we will mirror the argument found in Section 2.1 of \cite{BH98}.  We wish to show $(\mathbb{K}[x_1,...,x_n]/\sqrt(I)) \otimes_\mathbb{K} F $ satisfying $(S_\ell^j)$ implies $(\mathbb{K}[x_1,...,x_n]/\sqrt(I))$ satisfies $(S_\ell^j)$.  Let us consider $\mathfrak{p} \in \Spec (\mathbb{K}[x_1,...,x_n]/\sqrt(I))$.  Since $(\mathbb{K}[x_1,...,x_n]/\sqrt(I)) \otimes_\mathbb{K} F$ is faithfully flat, there exists a $\mathfrak{q} \in \Spec (\mathbb{K}[x_1,...,x_n]/\sqrt(I)) \otimes_\mathbb{K} F$ such that $\mathfrak{p} = (\mathbb{K}[x_1,...,x_n]/\sqrt(I)) \cap \mathfrak{q}$ and $\mathfrak{q}$ is minimal over $\mathfrak{p}((\mathbb{K}[x_1,...,x_n]/\sqrt(I)) \otimes_\mathbb{K} F)$.  Therefore, $\height \mathfrak{p} = \height \mathfrak{q}$.  

We will take $M=R=(\mathbb{K}[x_1,...,x_n]/\sqrt(I))_\mathfrak{p}$, $N=S = ((\mathbb{K}[x_1,...,x_n]/\sqrt(I)) \otimes_\mathbb{K} F)_\mathfrak{q}$.  Since $(\mathbb{K}[x_1,...,x_n]/\sqrt(I)) \otimes_\mathbb{K} F$ is flat over $(\mathbb{K}[x_1,...,x_n]/\sqrt(I))$, N is flat over R.
 Thus, applying Theorem 1.2.16 and A.11 from \cite{BH98} we get: 
 
\[ \depth_S M \otimes_R N = \depth_R M + \depth_S N/mN\]

\[ \dim_S M \otimes_R N = \dim_R M + \dim_S N/mN\]

The fiber of the extension $R \rightarrow S$ is a localization of $\mathbb{K}(\mathfrak{p}) \otimes F$.  Thus, the fiber is Cohen-Macaulay by Proposition 2.11 of \cite{BH98}.  The fiber of that map is also $S_\mathfrak{q}/\mathfrak{p}S_\mathfrak{q} = N_\mathfrak{q}/\mathfrak{p}N_\mathfrak{q}$, because $\mathfrak{p}$ is the maximal ideal of $R_\mathfrak{p}$.  Thus, $N_\mathfrak{q}/\mathfrak{p}N_\mathfrak{q}$ is Cohen-Macaulay.  Thus, subtracting the above equations gives:
\[ \dim M \otimes_R N - \depth_S M \otimes_R N = \dim M - \depth_R M. \]

Since $\height \mathfrak{p} = \height \mathfrak{q}$, we get $\dim R_\mathfrak{p} = \dim S_\mathfrak{q}$.  Thus, $\depth R_\mathfrak{p} = \depth S_\mathfrak{q} \geq \min \{ r, \dim S_\mathfrak{q}-j\} = \min \{ r, \dim R_\mathfrak{p}-j\}$.

\end{proof}

\section{Skeletons}
In this section, we examine the $i$-skeletons of simplicial complexes.  We begin with the definition.

\begin{defn}
The simplicial complex $\Delta ^{(i)} := \{ F \in \Delta | \dim F \leq  i \}$ is the \textit{i-skeleton} of $\Delta$.
\end{defn}

These $i$-skeletons are smaller complexes that retain many important properties of the original complex.  In particular, these complexes are a powerful tool for understanding depth.  A complex whose Stanley-Reisner ring has depth $b$ has a Cohen-Macaulay $b-1$-skeleton.

\begin{prop}[{\cite[Proposition $2.3$]{HT11}}]
Let $\mathbb{K}$ be a field and $\Delta$ a simplicial complex.  If $\Delta$ satisfies $(S_\ell)$ over $\mathbb{K}$, then $\Delta ^{(i)}$ also satisfies $(S_\ell)$ over $\mathbb{K}$ for $(2  \leq \ell \leq i+1)$.  
\end{prop}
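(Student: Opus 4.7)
The plan is to apply Terai's criterion (Theorem 6.2) to the $i$-skeleton $\Delta^{(i)}$ and translate the resulting vanishing requirement back into the statement furnished by the $(S_\ell)$ hypothesis on $\Delta$, via the same criterion. Two simple inputs drive the argument: a combinatorial identity for links in skeletons, and a standard topological fact about skeletons.

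For the combinatorial input, I would observe that for any $F \in \Delta^{(i)}$,
\[ \lk_{\Delta^{(i)}}(F) = \bigl(\lk_{\Delta}(F)\bigr)^{(i-|F|)}, \]
since a face $G$ disjoint from $F$ lies in $\lk_{\Delta^{(i)}}(F)$ iff $F\cup G\in\Delta$ with $|F\cup G|\leq i+1$. For the topological input, I would use the standard fact that the simplicial chain complexes of a complex $X$ and of its $m$-skeleton $X^{(m)}$ coincide in homological degrees $\leq m$, so $\tilde{H}_k(X^{(m)};\mathbb{K}) \cong \tilde{H}_k(X;\mathbb{K})$ for every $k < m$ (including $k=-1$).

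Now set $d=\dim\Delta+1$. The case $i\geq d-1$ gives $\Delta^{(i)}=\Delta$ and is vacuous, so assume $i\leq d-1$. To verify $(S_\ell)$ for $\Delta^{(i)}$ via Theorem 6.2 (applied with $d' = i+1$), fix $F\in \Delta^{(i)}$ with $|F|\leq i-k-1$ and $-1\leq k\leq \ell-2$. The inequality $|F|\leq i-k-1$ is exactly $k < i-|F|$, so the skeleton fact yields
\[ \tilde{H}_k\bigl(\lk_{\Delta^{(i)}}(F);\mathbb{K}\bigr) \cong \tilde{H}_k\bigl(\lk_{\Delta}(F);\mathbb{K}\bigr). \]
Since $|F|\leq i-k-1\leq d-k-2$, Terai's criterion applied to the $(S_\ell)$ complex $\Delta$ forces the right-hand side to vanish, completing the verification.

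The main potential pitfall is purely index bookkeeping: one must check that the hypothesis $\ell\leq i+1$ keeps $k$ within $-1\leq k\leq \ell-2\leq i-1$, which is precisely the range in which the skeleton equality is applicable and in which Terai's criterion for $\Delta$ supplies the required vanishing. There is no deeper obstruction; the proposition comes out as a transparent combination of Theorem 6.2 with the elementary fact that skeletons preserve low-degree reduced homology.
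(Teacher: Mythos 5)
Note first that the paper does not supply its own proof of this proposition: it is quoted verbatim as \cite[Proposition 2.3]{HT11}. So there is no internal argument to compare against line by line. What the paper \emph{does} prove is the generalization to $(S_\ell^j)$ (Theorem 8.3), and that proof proceeds along a different route than yours: it restricts to primes generated by variables, uses the identification $(\Delta^{(i)})_{\mathfrak{p}} \cong (\Delta_{\mathfrak{p}})^{(i-d+\dim R_{\mathfrak{p}})}$, and then invokes the fact that the $b$-skeleton of a complex is Cohen--Macaulay iff the depth of its Stanley--Reisner ring is at least $b+1$, together with a case analysis on whether $\ell$ or $\dim R_{\mathfrak{p}}-j$ is the binding constraint.

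Your argument is correct. The two inputs you isolate are exactly right: $\lk_{\Delta^{(i)}}(F) = \bigl(\lk_{\Delta}(F)\bigr)^{(i-|F|)}$, and $\tilde{H}_k(X^{(m)};\mathbb{K}) \cong \tilde{H}_k(X;\mathbb{K})$ for $k<m$ (the chain groups agree through degree $m$ and the boundaries agree through degree $m-1$). The bookkeeping checks out: the Terai bound for $\Delta^{(i)}$ reads $|F|\le i-k-1$, which gives $k < i-|F|$ (so the skeleton fact applies) and, since $i\le d-1$, also $|F|\le d-k-2$ (so the Terai vanishing for $\Delta$ applies). One tiny cosmetic point: you say ``assume $i\le d-1$'' right after disposing of $i\ge d-1$; you mean the nontrivial range $i\le d-2$, though the inequality you actually use ($i\le d-1$) holds throughout and nothing breaks.

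As a comparison: your Terai-criterion route is the more transparent one for the ungraded statement, since it stays entirely inside simplicial homology. The paper's proof of the generalization (Theorem 8.3) is, in spirit, the same argument repackaged algebraically --- localizing at a prime generated by variables is the ring-theoretic counterpart of passing to a link, and the ``depth $\ge b+1 \Leftrightarrow$ $b$-skeleton CM'' fact is the local-cohomology incarnation of the low-degree invariance of reduced homology under truncation --- but the algebraic packaging is what lets the author track the defect parameter $j$ cleanly, which your formulation would need more explicit index-chasing to handle.
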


\begin{theorem}
Let $\mathbb{K}$ be a field and $\Delta$ a simplicial complex with Stanley-Reisner ring $R$.  If $\Delta$ is pure and $R$ satisfies $(S_\ell^j)$ over $\mathbb{K}$, then $\Delta ^{(i)}$ is also pure and its Stanley-Reisner ring satisfies $(S_\ell^{\max \{ 0, j+i+1-d\}})$ over $\mathbb{K}$ for $(2  \leq \ell \leq i+1)$.  
\end{theorem}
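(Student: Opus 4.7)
The plan is to verify the skeleton satisfies the generalized Serre condition using Theorem 6.4 as the translation device in both directions. Set $\ell' := \max\{0, j+i+1-d\}$. First I would dispose of the trivial case $i \geq d-1$, in which $\Delta^{(i)} = \Delta$ and $\ell' \geq j$, so the conclusion is immediate from $(S_\ell^j) \Rightarrow (S_\ell^{j'})$ for $j' \geq j$. So assume $i < d-1$, and begin by checking purity: if $G$ is a face of $\Delta^{(i)}$ with $\dim G < i$, pick a facet $F$ of $\Delta$ containing $G$ (it exists and has $d$ vertices by purity of $\Delta$), and adjoin any vertex of $F \setminus G$ to get a larger face of $\Delta^{(i)}$. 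Hence every facet of $\Delta^{(i)}$ has dimension exactly $i$.

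Next I would invoke Theorem 6.4 applied to $\Delta^{(i)}$ (which now has dimension $i$), so that the desired conclusion $(S_\ell^{\ell'})$ reduces to showing $\tilde{H}_\alpha(\lk_{\Delta^{(i)}}(F); \mathbb{K}) = 0$ whenever $F \in \Delta^{(i)}$ with $|F| \leq i - 1 - \alpha - \ell'$ and $-1 \leq \alpha \leq \ell - 2$. The key combinatorial identity is
\[ \lk_{\Delta^{(i)}}(F) \;=\; (\lk_\Delta(F))^{(i - |F|)}, \]
since a face $G$ disjoint from $F$ lies in $\lk_{\Delta^{(i)}}(F)$ precisely when $G \cup F \in \Delta$ and $|G| \leq i+1-|F|$. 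Combining this with the standard fact that the reduced homology of a $k$-skeleton agrees with that of the original complex in degrees $\leq k-1$ (the simplicial chain complexes agree below the top dimension of the skeleton), one obtains
\[ \tilde{H}_\alpha(\lk_{\Delta^{(i)}}(F); \mathbb{K}) \;=\; \tilde{H}_\alpha(\lk_\Delta(F); \mathbb{K}) \quad \text{whenever } \alpha \leq i - |F| - 1. \]
The range condition $|F| \leq i - 1 - \alpha - \ell'$ together with $\ell' \geq 0$ guarantees $\alpha \leq i - |F| - 1$, so the identification applies across the whole range dictated by Theorem 6.4.

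Finally I would apply Theorem 6.4 in the other direction, to the hypothesis $(S_\ell^j)$ on $\Delta$, to get $\tilde{H}_\alpha(\lk_\Delta(F); \mathbb{K}) = 0$ for $|F| \leq d - \alpha - j - 2$ and $-1 \leq \alpha \leq \ell - 2$. All that remains is the bookkeeping inequality: the bound $|F| \leq i - 1 - \alpha - \ell'$ coming from the $\Delta^{(i)}$ side must imply $|F| \leq d - \alpha - j - 2$, i.e.\ $\ell' \geq i + j + 1 - d$. This is exactly the defining property of $\ell' = \max\{0, j+i+1-d\}$, and that is where the explicit formula for the new Serre exponent is forced.

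The main obstacle will be this arithmetic, particularly confirming that the weaker $|F|$-bound from applying Theorem 6.4 on the smaller complex is still strong enough to land inside the vanishing region for $\Delta$; once the choice $\ell' = \max\{0, j+i+1-d\}$ is identified, the inequality chain is tight and everything else is mechanical. The hypothesis $\ell \leq i+1$ is needed so that the range $-1 \leq \alpha \leq \ell - 2$ stays within $\alpha \leq i - 1$, ensuring that the skeleton homology identification is applicable at every relevant $(F, \alpha)$.
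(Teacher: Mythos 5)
Your proof is correct, but it takes a genuinely different route from the paper's. The paper argues directly at the level of depths: it localizes $R$ at a prime $\mathfrak{p}$ generated by variables, uses the identity $(\Delta^{(i)})_\mathfrak{p} \cong (\Delta_\mathfrak{p})^{(i-d+\dim R_\mathfrak{p})}$ (valid by purity), invokes the fact that the $b$-skeleton of a complex is Cohen-Macaulay if and only if the depth of the original Stanley-Reisner ring is at least $b+1$, and then splits into the two cases $i+j+1 \leq d$ and $i+j+1 > d$ to chase the inequalities and land on $\max\{0, j+i+1-d\}$. You instead route the whole argument through the generalized Reisner criterion (Theorem 6.4), turning the statement into a reduced-homology vanishing condition on links, and then use the combinatorial identity $\lk_{\Delta^{(i)}}(F) = (\lk_\Delta(F))^{(i-|F|)}$ together with the elementary fact that a $k$-skeleton has the same reduced homology as the full complex in degrees at most $k-1$. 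That reduces the theorem to a single tight arithmetic inequality, and the formula $\ell' = \max\{0, j+i+1-d\}$ appears transparently as exactly what is needed to push the $|F|$-bound for $\Delta^{(i)}$ inside the vanishing region for $\Delta$; your only case split is the trivial $i \geq d-1$. The two proofs are of comparable length, but yours makes the origin of the exponent $\ell'$ more visible and avoids the explicit localization bookkeeping, at the cost of leaning on the full if-and-only-if content of Theorem 6.4 (including its purity-dependent direction, which you correctly justify by first establishing purity of $\Delta^{(i)}$).
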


\begin{proof}

Let us consider $R$ that satisfies $(S_\ell^j)$.  Then $\depth R _\mathfrak{p} \geq \min \{ \ell, \dim R_\mathfrak{p} -j \}$ for all $\mathfrak{p} \in \Spec R$.  As noted earlier, we may restrict our consideration to primes generated by variables.  Let $\Delta_\mathfrak{p}$ be the simplicial complex whose localized Stanley-Reisner ring is $R_\mathfrak{p}$.  We note $(\Delta_\mathfrak{p})^{(i-d+\dim R_\mathfrak{p})}$ $\cong (\Delta ^{(i)})_\mathfrak{p}$ when $d-\dim R_\mathfrak{p} \leq i$.  We also note that if $d-\dim R_\mathfrak{p} > i$ then $(\Delta ^{(i)})_\mathfrak{p}$ has Stanley-Reisner ring $\mathbb{K}$ or $0$, and thus is Cohen-Macaulay.  It is known that the $b$-skeleton of a simplicial complex is Cohen-Macaulay if and only if the depth of that complex is greater than or equal to $b+1$.

We consider two cases.  First, $i +j+1 \leq d$.  This is equivalent to $|(\Delta_\mathfrak{p})^{(i-d+\dim R_\mathfrak{p})}| \leq \dim R_\mathfrak{p} -j$.  If $\depth R_\mathfrak{p} \geq i-d+\dim R_\mathfrak{p}+1$ then $(\Delta_\mathfrak{p})^{(i-d+\dim R_\mathfrak{p})}$ is Cohen-Macaulay.  Otherwise, $\depth R_\mathfrak{p} < |(\Delta_\mathfrak{p})^{(i-d+\dim R_\mathfrak{p})}| \leq \dim R_\mathfrak{p} -j$.  Thus, $\depth R_\mathfrak{p} \geq \ell$.  Thus, $\depth (\Delta_\mathfrak{p})^{(i-d+\dim R_\mathfrak{p})} \geq \ell$.  Combining these, we get:

$\depth (\Delta_\mathfrak{p})^{(i-d+\dim R_\mathfrak{p})} \geq \min \{ \ell, |(\Delta_\mathfrak{p})^{(i-d+\dim R_\mathfrak{p})}| \}$.  

Therefore, the Stanley-Reisner ring of $(\Delta)^{i}$ satisfies $(S_\ell)$.

Now consider the case $i +j +1 > d$.  Again if $\depth R_\mathfrak{p} \geq i-d+\dim R_\mathfrak{p}+1$ then $(\Delta_\mathfrak{p})^{(i-d+ dim R_\mathfrak{p})}$ is Cohen-Macaulay.  Otherwise, $\depth (\Delta_\mathfrak{p})^{(i-d+ \dim R_\mathfrak{p})} =\depth R_\mathfrak{p} \geq \min \{ \ell, \dim R_\mathfrak{p} - j \} = \min \{ \ell, i+1-d + \dim R_\mathfrak{p} - (i+j+1-d)\}$.  Thus, we have the Stanley-Reisner ring of $(\Delta ^{(i)})_\mathfrak{p}$ satisfies $(S_\ell^{i+j+1-d})$.  Combining these two cases gives the proof of the theorem.

\end{proof}

\begin{theorem}
Let $\mathbb{K}$ be a field and $\Delta$ a simplicial complex with Stanley-Reisner ring $R$.  If $R$ satisfies $(S_\ell^j)$ over $\mathbb{K}$, then the Stanley-Reisner ring of $\Delta ^{(i)}$ satisfies $(S_\ell^j)$ over $\mathbb{K}$ for $(2  \leq \ell \leq i+1)$.  
\end{theorem}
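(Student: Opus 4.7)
The plan is to verify $(S_\ell^j)$ on $R'$ (the Stanley-Reisner ring of $\Delta^{(i)}$) directly by controlling depth on localizations at primes generated by variables, and to reduce the problem to a standard skeleton-depth estimate. As in the proofs of Theorem~$4.5$ and Corollary~$6.5$, the squarefree structure of $\Ext_S^{n-\bullet}(R',S)$ from~\cite{Ya00} means it suffices to check $\depth R'_\mathfrak{p} \geq \min\{\ell,\dim R'_\mathfrak{p}-j\}$ at primes $\mathfrak{p}$ of $R'$ generated by variables. Such a prime corresponds to a face $F \in \Delta^{(i)}$, and the identity $\lk_{\Delta^{(i)}}(F) = (\lk_\Delta F)^{(k)}$ with $k := i - |F|$ identifies $R'_\mathfrak{p}$ with the localized Stanley-Reisner ring of $(\lk_\Delta F)^{(k)}$. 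Note $\dim R'_\mathfrak{p} = \min\{\dim\lk_\Delta F + 1,\, k+1\}$.

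The key lemma to establish is the skeleton-depth inequality: for any simplicial complex $\Gamma$ and any integer $k \geq -1$,
\[ \depth \mathbb{K}[\Gamma^{(k)}] \;\geq\; \min\{\depth \mathbb{K}[\Gamma],\; k+1\}. \]
The plan is to derive this via Hochster's formula. For $s$ strictly less than both $\depth \mathbb{K}[\Gamma]$ and $k+1$, and any $G \in \Gamma^{(k)}$ with $|G| \leq k$, combine $\lk_{\Gamma^{(k)}}G = (\lk_\Gamma G)^{(k-|G|)}$ with the observation that the inclusion $\Sigma^{(t)} \hookrightarrow \Sigma$ induces an isomorphism on reduced homology in degrees strictly less than $t$ (since the two chain complexes agree through degree $t$). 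This gives $\tilde{H}_{s-|G|-1}(\lk_{\Gamma^{(k)}}G;\mathbb{K}) \cong \tilde{H}_{s-|G|-1}(\lk_\Gamma G;\mathbb{K})$, and the right side vanishes since $s < \depth \mathbb{K}[\Gamma]$. The edge case $|G| = k+1$ produces only the trivial link and is handled separately.

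To conclude, apply this estimate to $\Gamma = \lk_\Delta F$ and combine with the hypothesis $\depth R_\mathfrak{p} \geq \min\{\ell, d_\mathfrak{p} - j\}$, where $d_\mathfrak{p} := \dim\lk_\Delta F + 1$. This yields
\[ \depth R'_\mathfrak{p} \;\geq\; \min\{\ell,\; d_\mathfrak{p} - j,\; k+1\}. \]
If $\dim\lk_\Delta F \leq k$, then $(\lk_\Delta F)^{(k)} = \lk_\Delta F$, so $R'_\mathfrak{p} = R_\mathfrak{p}$ and the desired bound is immediate. Otherwise $\dim R'_\mathfrak{p} = k+1$, and since $d_\mathfrak{p} - j \geq k+2-j > k+1-j$ and $k+1 \geq k+1-j$, the right-hand side is at least $\min\{\ell,\, k+1-j\} = \min\{\ell,\, \dim R'_\mathfrak{p} - j\}$, as required.

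The main obstacle is the skeleton-depth inequality, but this is a clean consequence of Hochster's formula together with the truncated chain complex comparison; the rest is bookkeeping with a short case split according to whether taking the $k$-skeleton actually truncates the link of $F$.
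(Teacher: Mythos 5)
Your proof is correct. The overall strategy — reduce to primes generated by variables via Yanagawa's squarefree-module observation, identify $R'_\mathfrak{p}$ with the Stanley–Reisner ring of $(\lk_\Delta F)^{(i-|F|)}$, and then control depth through skeleton facts — is the same route the paper takes. The difference is in how the skeleton-depth step is packaged. You isolate and prove from Hochster's formula the clean inequality $\depth\mathbb{K}[\Gamma^{(k)}]\geq\min\{\depth\mathbb{K}[\Gamma],\,k+1\}$ as a standalone lemma, then plug in $\Gamma=\lk_\Delta F$ and split on whether truncation actually shortens the link; the paper instead invokes the criterion ``the $b$-skeleton is Cohen--Macaulay iff $\depth\geq b+1$'' together with an (implicit) depth-equality for skeletons below that threshold, and organizes its case split around the comparison of $i+1-n+\dim S_\mathfrak{p}$ with $\dim R_\mathfrak{p}-j$. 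The two are equivalent in substance, but your version is more transparent and self-contained: the one lemma you prove immediately yields $\depth R'_\mathfrak{p}\geq\min\{\ell,\,d_\mathfrak{p}-j,\,k+1\}$, after which the required bound $\min\{\ell,\,\dim R'_\mathfrak{p}-j\}$ drops out of a short comparison, whereas the paper's bookkeeping (involving $\dim S_\mathfrak{p}$, $n$, $d$, and $\dim R_\mathfrak{p}$ simultaneously) is harder to audit. One small check worth spelling out in a final write-up is the edge case $|G|=k+1$ in your skeleton-depth lemma, where $\lk_{\Gamma^{(k)}}G=\{\emptyset\}$ and one needs $s-|G|-1\leq -2$ (which holds because $s\leq k$), since $\tilde H_{-1}(\{\emptyset\};\mathbb{K})\neq 0$ would otherwise be a concern; you flag this but do not write out the bound.
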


\begin{proof}
This proof will be similar to the previous proof.

Let us consider $R$ that satisfies $(S_\ell^j)$.  

We consider two cases.  First, $i+1-n+\dim S_\mathfrak{p} \leq \dim R_\mathfrak{p} -j$.  This is equivalent to 

$|(\Delta_\mathfrak{p})^{(i-n+\dim S_\mathfrak{p})}| \leq \dim R_\mathfrak{p} -j$.  If $\depth R_\mathfrak{p} \geq i-n+\dim S_\mathfrak{p}+1$ then $(\Delta_\mathfrak{p})^{(i-n+\dim S_\mathfrak{p})}$ is Cohen-Macaulay.  Otherwise, $\depth R_\mathfrak{p} < |(\Delta_\mathfrak{p})^{(i-n+\dim S_\mathfrak{p})}| \leq \dim R_\mathfrak{p} -j$.  Thus, $\depth R_\mathfrak{p} \geq \ell$.  Thus, $\depth (\Delta_\mathfrak{p})^{(i-n+\dim S_\mathfrak{p})} \geq \ell$.  Combining these, we get

$\depth (\Delta_\mathfrak{p})^{(i-n+\dim S_\mathfrak{p})} \geq \min \{ \ell, |(\Delta_\mathfrak{p})^{(i-n+\dim S_\mathfrak{p})}| \}$.  

Therefore, the Stanley-Reisner ring of $(\Delta)^{i}$ satisfies $(S_\ell)$.

Now consider the case $i +1-n+\dim S_\mathfrak{p} > \dim R_\mathfrak{p}-j$.  Again if $\depth R_\mathfrak{p} \geq i-n+\dim S_\mathfrak{p}+1$ then $(\Delta_\mathfrak{p})^{(i-n+ dim S_\mathfrak{p})}$ is Cohen-Macaulay.  Otherwise, $\depth (\Delta_\mathfrak{p})^{(i-n+ \dim S_\mathfrak{p})} =\depth R_\mathfrak{p} \geq \min \{ \ell, \dim R_\mathfrak{p} - j \} = \min \{ \ell, i+1-n + \dim S_\mathfrak{p} - (i+j+1-n-\dim R_\mathfrak{p} + \dim S_\mathfrak{p})\}$.  We note if $i+j+1-n-\dim R_\mathfrak{p} + \dim S_\mathfrak{p} \geq j$ then $i+1-\dim R_\mathfrak{p} \geq (n-\dim S_\mathfrak{p})$.  We note $n-\dim S_\mathfrak{p}$ is positive and thus we get $i+1 \geq \dim R_\mathfrak{p}$.  In this case, we note we would merely be examining $\Delta_\mathfrak{p}$.  Thus, we have the Stanley-Reisner ring of $(\Delta ^{(i)})_\mathfrak{p}$ satisfies $(S_\ell^j)$.  Combining these two cases gives the proof of the theorem.

\end{proof}

\section*{Acknowledgment}
I would like to thank my adviser Hailong Dao for insight and mentoring.  I would like to thank Justin Lyle and Mohammad Eghbali for helpful conversations, and I would like to thank my reviewer for detailed, insightful feedback.

\bibliographystyle{amsalpha}
\bibliography{mybib}
\end{document}